\newcommand{\op} {\overline{\partial}}
\newcommand{\C}{\ensuremath{\mathbb{C}}}
\newcommand{\sumprime}{\if@display\sideset{}{'}\sum%
            \else\sum'\fi}
\begin{document}

\numberwithin{equation}{section}

% define theorem environments
\newtheorem{theorem}{Theorem}[section]
\newtheorem{proposition}[theorem]{Proposition}
\newtheorem{conjecture}[theorem]{Conjecture}
\def\theconjecture{\unskip}
\newtheorem{corollary}[theorem]{Corollary}
\newtheorem{lemma}[theorem]{Lemma}
\newtheorem{observation}[theorem]{Observation}
\newtheorem{definition}{Definition}
\numberwithin{definition}{section} %\def\thedefinition{\unskip}
\newtheorem{remark}{Remark}
\def\theremark{\unskip}
\newtheorem{question}{Question}
\def\thequestion{\unskip}
\newtheorem{example}{Example}
\def\theexample{\unskip}
\newtheorem{problem}{Problem}

\thanks{Research supported by the Key Program of NSFC No. 11031008.}

\address{Department of Mathematics, Tongji University, Shanghai, 200092, China}
\email{1113xuwang@tongji.edu.cn}

\title[Variation of the Bergman kernels]{Variation of the Bergman kernels under deformations of complex structures}
 \author{Xu Wang}
\date{}
\maketitle

\begin{abstract}  Inspired by Berndtsson's work on the subharmonicity property of the Bergman kernel, we give a local variation formula of the full Bergman kernels associated to deformations of complex manifolds. In compact case, it follows from the reproducing property of the Bergman kernel and the curvature formula of the $0$-th direct image sheaf. In general, following Schumacher's idea, we use the Lie derivative to compute the variation. An equivalent criterion for the triviality of holomorphic motions of planar domains in terms of the Bergman kernel is given as an application.
\bigskip

\noindent{{\sc Mathematics Subject Classification} (2010):  32A25.}

\smallskip

\noindent{{\sc Keywords}: Bergman kernel, deformation, $\op$-equation, plurisubharmonic function, holomorphic motion, pseudoconvex domain.}
\end{abstract}

\section{Basic definitions and results}

Let $D$ be a pseudoconvex domain in $\C^k_t \times \C^n_z$ and let $\phi$ be a plurisubharmonic function in $D$. Denote by $U$ the projection of $D$ to $\C^k_t $. Put
$$
D_t=\{z\in \C^n:~(t,z)\in D\}, \  \phi^t=\phi|_{D_t}, \ \forall\ t\in U.
$$
Denote by $A^2(D_t,e^{-\phi^t})$ the Bergman space of weighted $L^2$ holomorphic functions in $D_t$. Denote by $K^t(z,\bar w)$ the Bergman kernel of $A^2(D_t,e^{-\phi^t})$. Our start point is the following rather remarkable result of Berndtsson (Theorem 1.1 in \cite{Bern06}):

\begin{theorem}\label{th:Bern1} The function $\log K^t(z,\bar z)$ is plurisubharmonic, or identically equal to $-\infty$ in $D$.
\end{theorem}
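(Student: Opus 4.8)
The plan is to reduce Theorem~\ref{th:Bern1} to a one–parameter subharmonicity statement and then prove that statement by a curvature computation for the associated family of weighted Bergman spaces. Since a function on $D\subset\C^k_t\times\C^n_z$ is plurisubharmonic exactly when its restriction to every complex affine line is subharmonic, it suffices to check subharmonicity of $\log K^t(z,\bar z)$ along each such line. A line contained in a single fibre $\{t=t_0\}$ gives the classical fact that $\log K^{t_0}(z,\bar z)$ is plurisubharmonic in $z$, which is immediate from the extremal description $K^{t_0}(z,\bar z)=\sup\{|f(z)|^2:f\in A^2(D_{t_0},e^{-\phi^{t_0}}),\ \|f\|\le1\}$ as an upper envelope of the functions $\log|f|^2$. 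For any other line, a linear change of coordinates in $\C^k_t$ together with a holomorphic shear $(t,z)\mapsto(t,z-tb)$ — both of which preserve pseudoconvexity, preserve plurisubharmonicity of the weight, and transform the fibrewise Bergman kernels covariantly, since the shear acts on each fibre by a translation in $z$ — reduces matters to the following: if $D\subset\C_t\times\C^n_z$ is pseudoconvex and $\phi$ is plurisubharmonic on $D$, then for each fixed $z_0$ the function $t\mapsto\log K^t(z_0,\bar z_0)$ is subharmonic, or identically $-\infty$.

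Before the main computation I would pass to a regular situation by a standard approximation: exhaust $D$ by bounded strictly pseudoconvex domains $D_j$ with smooth boundary and decrease $\phi$ to it by plurisubharmonic functions $\phi_j$ smooth near $\bar D_j$. Restriction gives a norm–nonincreasing map $A^2(D,e^{-\phi})\to A^2(D_j,e^{-\phi_j})$, so from the extremal description $K^t_{D_j,\phi_j}(z_0,\bar z_0)\ge K^t_{D,\phi}(z_0,\bar z_0)$, and (by a normal–families argument using Fatou against a fixed $\phi_l$ followed by monotone convergence) $K^t_{D_j,\phi_j}(z_0,\bar z_0)\downarrow K^t_{D,\phi}(z_0,\bar z_0)$; a decreasing limit of subharmonic functions is subharmonic, so we may assume $D$ is smoothly bounded and strictly pseudoconvex and $\phi$ is smooth up to the boundary.

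The heart of the argument is to show that the family of Hilbert spaces $H_t:=A^2(D_t,e^{-\phi^t})$, $t\in U\subset\C_t$, with its $L^2$ metric, has nonnegative curvature. Local holomorphic sections of $H$ are holomorphic functions $F(t,z)$ on pieces of $D$ with the appropriate integrability. Differentiating the inner product in $t$ produces, besides the term $-F\,\partial_t\phi$, a boundary contribution over $\partial D_t$ coming from the motion of the fibre; consequently the $(1,0)$–part of the Chern connection is $D'_tF=P_t(\partial_tF-F\,\partial_t\phi)$ (with $P_t$ the Bergman projection on $D_t$) corrected by this boundary term. Computing the curvature $\Theta=[D',\op]$ and pairing $i\Theta_{t\bar t}$ with a holomorphic section $u$, one obtains $\langle i\Theta_{t\bar t}u,u\rangle$ as a sum of three pieces: an integral over $D_t$ of the complex Hessian of $\phi$ against $|u|^2e^{-\phi}$, the squared $L^2$–norm of a canonical solution of a $\op$–equation on $D_t$, and a boundary integral over $\partial D_t$ weighted by the Levi form. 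Plurisubharmonicity of $\phi$ makes the first piece nonnegative, the second is manifestly nonnegative, and — this is the main obstacle, and the only genuinely delicate point — pseudoconvexity of $D$ forces the Levi–form boundary integral to be nonnegative as well; here the smoothness and strict pseudoconvexity arranged above are exactly what legitimises the integrations by parts and the regularity up to the boundary of $u$ and of the $\op$–solutions. Hence $\langle i\Theta_{t\bar t}u,u\rangle\ge0$ for every holomorphic $u$, i.e. $H$ has nonnegative curvature.

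It remains to extract the conclusion by duality. For fixed $z_0$ with $(t,z_0)\in D$ near the point in question, the evaluation $\mathrm{ev}_{z_0}\colon f\mapsto f(z_0)$ is a bounded functional on $H_t$ whose representing vector is $K^t(\cdot,\bar z_0)$, so $\|\mathrm{ev}_{z_0}\|_{H_t^*}^2=\|K^t(\cdot,\bar z_0)\|_{H_t}^2=K^t(z_0,\bar z_0)$; moreover $t\mapsto\mathrm{ev}_{z_0}$ is a holomorphic section of the dual bundle $H^*$, because pairing it with any local holomorphic section $F$ of $H$ yields $F(t,z_0)$, which is holomorphic in $t$. Since $H$ has nonnegative curvature, $H^*$ has nonpositive curvature, and for a holomorphic section $\sigma$ of a bundle with nonpositive Griffiths curvature one has $i\partial\bar\partial\log\|\sigma\|^2\ge -\langle i\Theta_{H^*}\sigma,\sigma\rangle/\|\sigma\|^2\ge0$. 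Applying this to $\sigma=\mathrm{ev}_{z_0}$ shows that $t\mapsto\log K^t(z_0,\bar z_0)$ is subharmonic (and it is $\equiv-\infty$ precisely when $\mathrm{ev}_{z_0}\equiv0$, i.e. when the relevant Bergman spaces are trivial). Reversing the reductions of the first paragraph gives plurisubharmonicity of $\log K^t(z,\bar z)$ on all of $D$.
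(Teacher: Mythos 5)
Your overall architecture (Oka's trick of slicing along lines, a curvature statement for the family of weighted Bergman spaces, and duality via the evaluation functional) is the right philosophy, and the first and last steps are sound. But the step you yourself flag as ``the only genuinely delicate point'' is not a delicate point inside an otherwise complete proof --- it \emph{is} the theorem, and as written it is asserted rather than proved. Concretely: you claim nonnegativity of the curvature of $t\mapsto A^2(D_t,e^{-\phi^t})$ for \emph{varying} fibres $D_t$, with a Levi-form boundary integral absorbing the motion of $\partial D_t$. This is precisely what neither Berndtsson's proof nor the present paper attempts in that generality. Berndtsson's Theorem~\ref{th:Bern2} is proved only for a product $D=D_0\times U$, and the general pseudoconvex case is reached not by a moving-boundary curvature computation but by freezing the domain: one replaces $D_t$ by a fixed $\Omega\supset D_t$ and weights $\phi+j\psi$ with $\psi$ plurisubharmonic, $\psi\le 0$ on $D$ and $\psi>0$ off $D$, so that the variable-domain kernels are decreasing limits of fixed-domain kernels (this is the content behind Theorem~\ref{th:Bern}). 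The direct boundary-term computation you propose is carried out in this paper only for planar fibres (Corollary~\ref{co:onephi}), where the $\dbar$-Neumann problem is elliptic, and even there it needs the admissibility hypothesis $V(\rho)=0$ on $\partial\mathcal X$ and regularity of $K^t$ up to the boundary.

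Two specific obstructions to your route. First, your strictly pseudoconvex exhaustion $D_j$ of $D$ does not make the projection $D_j\to\C_t$ a nice fibration: $p|_{\partial D_j}$ will in general have critical values, the fibres $D_{j,t}$ can be disconnected or change topology, and no $p$-admissible vector field moving $\partial D_{j,t}$ need exist near such degenerations --- so the boundary integral ``weighted by the Levi form'' is not even well set up after your reduction. Second, the family $\mathcal H=\{A^2(D_t,e^{-\phi^t})\}$ is in general not a locally trivial holomorphic vector bundle (the paper states this explicitly after \eqref{eq:secondorder}), so the objects your duality step relies on --- the Chern connection $D'_t$, the curvature $\Theta=[D',\dbar]$, the dual bundle $H^*$ and the criterion for holomorphy of $t\mapsto\mathrm{ev}_{z_0}$ --- do not come for free and must be replaced either by the product reduction above or by the Lie-derivative formalism of Theorem~\ref{th:Main}, together with the regularity hypotheses \eqref{eq:main1}--\eqref{eq:main2}. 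Until the curvature inequality for varying fibres is actually established (or circumvented by the fixed-domain approximation), the argument has a hole exactly where the content lies.
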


The most important ingredient in the proof of Theorem~\ref{th:Bern1} is a particular case of a result from \cite{Bern09}:

\begin{theorem}\label{th:Bern2} Assume that $D=D_0\times U$, where $D_0$ is a smoothly bounded strictly pseudoconvex domain and $\phi$ is smooth up to the boundary. Then the curvature of $A^2(D_0,e^{-\phi^t})\times U$ is nonnegative.
\end{theorem}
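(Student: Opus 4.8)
The strategy is to compute the curvature $\Theta$ of the Chern connection on $E:=A^2(D_0,e^{-\phi^t})\times U$ explicitly, and to reduce its positivity to H\"ormander's $L^2$-estimate for $\dbar$. Since $D_0$ is fixed and only the Hermitian metric $\|u\|_t^2=\int_{D_0}|u|^2e^{-\phi^t}$ varies with $t$, a holomorphic section of $E$ is just a function $u(t,z)$ holomorphic in $(t,z)$, and to check that the curvature form is nonnegative it is enough to test it on sections holomorphic on a neighbourhood of $\overline{D_0}\times\{t_0\}$ — such functions are dense in each fibre because $D_0$ is a smoothly bounded strictly pseudoconvex domain. Fix $t_0$ and such a section $u_0$; after replacing $t$ by a linear combination $\sum a_jt_j$ we may assume $k=1$, and we take the constant extension $u(t,z)=u_0(z)$. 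The first step is to prove the pointwise identity
\[
\langle\Theta_{t\bar t}u_0,u_0\rangle=\int_{D_0}\Big(\frac{\partial^2\phi}{\partial t\,\partial\bar t}\Big)\,|u_0|^2\,e^{-\phi^t}\;-\;\|w_t\|_t^2,\qquad w_t:=(I-P_t)\big((\partial_t\phi)\,u_0\big),
\]
where $P_t$ is the Bergman projection of $A^2(D_0,e^{-\phi^t})$.

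For the formula one differentiates under the integral sign. Using $\partial_{\bar t}u_0=0$ one gets
\[
\partial_t\partial_{\bar t}\|u\|_t^2=\big\|(\partial_t\phi)\,u_0\big\|_t^2-\int_{D_0}\Big(\frac{\partial^2\phi}{\partial t\,\partial\bar t}\Big)\,|u_0|^2\,e^{-\phi^t},
\]
while the $(1,0)$-part of the Chern connection is $D'_{\partial_t}u=-P_t\big((\partial_t\phi)u_0\big)$, so that $\|(\partial_t\phi)u_0\|_t^2=\|D'_{\partial_t}u\|_t^2+\|w_t\|_t^2$ by orthogonality. Inserting this into the standard Chern-connection identity $\langle\Theta_{t\bar t}u,u\rangle=\|D'_{\partial_t}u\|_t^2-\partial_t\partial_{\bar t}\|u\|_t^2$ (the sign of which is fixed by the rank-one case) gives the displayed identity. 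The crucial observation is that, since $u_0$ is holomorphic in $z$ and the range of $P_t$ consists of holomorphic functions, $w_t$ solves
\[
\dbar_z w_t=u_0\,\dbar_z(\partial_t\phi)\quad\text{on }D_0,
\]
and, being orthogonal to all holomorphic functions, $w_t$ is \emph{the minimal-norm solution} of this equation in $L^2(D_0,e^{-\phi^t})$.

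Everything therefore reduces to the estimate $\|w_t\|_t^2\le\int_{D_0}(\partial_t\partial_{\bar t}\phi)\,|u_0|^2e^{-\phi^t}$, which is of H\"ormander type. Replacing $\phi$ by $\phi+\varepsilon|z|^2$ — still plurisubharmonic and smooth up to $\partial D_0$, with equivalent $L^2$-norms on the bounded domain $D_0$, and with $\partial_t\phi$ and $\partial_t\partial_{\bar t}\phi$ unchanged — we may assume $i\partial\dbar_z\phi^t>0$ on $D_0$. Then H\"ormander's theorem on the pseudoconvex domain $D_0$ with weight $\phi^t$ furnishes a solution $v$ of $\dbar_zv=u_0\,\dbar_z(\partial_t\phi)$ with
\[
\int_{D_0}|v|^2e^{-\phi^t}\;\le\;\int_{D_0}\big|\dbar_z(\partial_t\phi)\big|^2_{i\partial\dbar_z\phi^t}\,|u_0|^2\,e^{-\phi^t},
\]
so that $\|w_t\|_t^2\le\int_{D_0}|v|^2e^{-\phi^t}$. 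Finally, the pointwise inequality
\[
\big|\dbar_z(\partial_t\phi)\big|^2_{i\partial\dbar_z\phi^t}=\sum_{j,k}\phi^{j\bar k}\,\frac{\partial^2\phi}{\partial t\,\partial\bar z_j}\;\overline{\frac{\partial^2\phi}{\partial t\,\partial\bar z_k}}\;\le\;\frac{\partial^2\phi}{\partial t\,\partial\bar t}
\]
is nothing but the nonnegativity of the Schur complement of the (positive definite) $z$-block inside the positive semidefinite full complex Hessian of $\phi$ in the variables $(t,z)$. Combining the three displays gives $\langle\Theta_{t\bar t}u_0,u_0\rangle\ge0$; letting $\varepsilon\to0$ (a routine approximation, $D_0$ being bounded) and running the same computation with $\sum a_j\partial_{t_j}$ in place of $\partial_t$ yields Griffiths — and, via the matrix version of the Schur-complement/H\"ormander step, Nakano — nonnegativity of the curvature.

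The main obstacle is this middle link: identifying the error term $\|w_t\|^2$ in the curvature formula as a minimal $\dbar$-solution, and then seeing that H\"ormander's bound, fed with the inverse of the $z$-Hessian, produces precisely the Schur complement $\partial_t\partial_{\bar t}\phi-\sum\phi_{t\bar z_j}\phi^{j\bar k}\phi_{z_k\bar t}$, so that the two terms in the identity cancel with the right sign. The regularisation $\phi\rightsquigarrow\phi+\varepsilon|z|^2$ (necessary because $i\partial\dbar_z\phi^t$ may be degenerate even when $\phi$ is smooth) and the routine verification — using smoothness of $\phi$ up to $\partial D_0$ and strict pseudoconvexity of $D_0$ — that all integrations by parts are boundary-term-free and that the nice sections are dense are the remaining technical points.
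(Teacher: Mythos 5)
The paper does not prove this statement: it is quoted as Berndtsson's theorem and referred to \cite{Bern09} (see also Theorem~2.1 of \cite{Bern06}), so there is no internal proof to compare against. Your argument is correct and is essentially Berndtsson's own: the curvature identity $\langle\Theta_{t\bar t}u_0,u_0\rangle=\int_{D_0}\phi_{t\bar t}|u_0|^2e^{-\phi^t}-\|w_t\|_t^2$ with $w_t=(I-P_t)((\partial_t\phi)u_0)$ the minimal solution of $\dbar_z w=u_0\,\dbar_z(\partial_t\phi)$, followed by H\"ormander's estimate and the Schur-complement inequality $|\dbar_z(\partial_t\phi)|^2_{i\partial\dbar_z\phi^t}\le\phi_{t\bar t}$, is exactly the standard route, and your regularisation $\phi\rightsquigarrow\phi+\varepsilon|z|^2$ and the reduction to one base variable are handled correctly.
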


For applications of the above two theorems, see \cite{Bern06}, \cite{Bern09}, \cite{Bern09a}, \cite{Bern13}, \cite{BernPaun08}. See also \cite{MY04} for the first results in this direction.

Let $f$ be a complex valued function on $D_0\times U$ such that $f^t$ lies in $L^2(D_0)$ and $f$ depends smoothly on $t$. Then
\begin{equation}\label{eq:htf}
    \tau(t): \ h^t\mapsto\int_{D_0}h^t\bar {f^t}e^{-\phi^t}
\end{equation}
defines a holomorphic section of the dual of $A^2(D_0,e^{-\phi^t})\times U$ if and only if
\begin{equation}\label{eq:df}
    D'f^t:=e^{\phi^t}\partial/\partial t(f^te^{-\phi^t})\ \bot \ A^2(D_0,e^{-\phi^t}),\ \forall \ t\in U.
\end{equation}
Notice that
\begin{equation}\label{eq:kf}
    K_f(t):=\int_{D_0\times D_0}K^t(z,\bar w)f^t(w)\overline{f^t(z)}e^{-\phi^t(z)-\phi^t(w)}=||\tau(t)||^2.
\end{equation}
Thus Theorem~\ref{th:Bern2} implies that if $f$ satisfies \eqref{eq:df} then $\log K_f(t)$ is plurisubharmonic.

For general $D$ and $\phi$, let $f$ be a complex valued function on $D$ such that $f^t\in L^2(D_t,e^{\phi^t})$ for every $t\in U$. Assume that $f$ dose not depend on $t$, i.e. $f(z,t_1)=f(z,t_2)$ as long as $(z,t_1), (z,t_2)\in D$. We also put \begin{equation*}
    K_{fe^\phi}(t)=\int_{D_t\times D_t}K^t(z,\bar w)f(w)\overline{f(z)}.
\end{equation*}
By using a similar argument as in the proof of Theorem~\ref{th:Bern1}, we get another form of Berndtsson's Theorem:

\begin{theorem}\label{th:Bern} Let $u^t$ be the minimal solution of $\op u=\op(fe^{\phi^t})$ in $L^2(D_t,e^{-\phi^t})$. Then the function $\log(||fe^{\phi^t}||^2-||u^t||^2)=\log K_{fe^{\phi}}(t)$ is plurisubharmonic, or identically equal to $-\infty$ in $U$.
\end{theorem}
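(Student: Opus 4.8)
The plan is to isolate two independent claims: the algebraic identity $\|fe^{\phi^t}\|^2-\|u^t\|^2=K_{fe^\phi}(t)$ (all norms taken in $L^2(D_t,e^{-\phi^t})$), which is a short Hilbert-space computation, and the plurisubharmonicity of $\log K_{fe^\phi}$, which I would deduce by transcribing Berndtsson's proof of Theorem~\ref{th:Bern1}.

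\emph{Step 1: the identity.} Since $fe^{\phi^t}$ already lies in $L^2(D_t,e^{-\phi^t})$, with $\|fe^{\phi^t}\|^2=\int_{D_t}|f|^2e^{\phi^t}<\infty$ by hypothesis, the $L^2(D_t,e^{-\phi^t})$-solutions of $\op u=\op(fe^{\phi^t})$ form the affine space $fe^{\phi^t}+A^2(D_t,e^{-\phi^t})$; hence the minimal one is $u^t=(I-P^t)(fe^{\phi^t})$, where $P^t$ is the Bergman projection onto $A^2(D_t,e^{-\phi^t})$. Put $g^t:=fe^{\phi^t}-u^t=P^t(fe^{\phi^t})\in A^2(D_t,e^{-\phi^t})$; then $u^t\ \bot\ A^2(D_t,e^{-\phi^t})$, so $\|g^t\|^2=\|fe^{\phi^t}\|^2-\|u^t\|^2$ by Pythagoras. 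The reproducing property of $K^t$ gives $g^t(z)=\int_{D_t}K^t(z,\bar w)f(w)\,dw$, and, using $fe^{\phi^t}=g^t+u^t$ with $u^t\ \bot\ g^t$, $\|g^t\|^2=\langle g^t,fe^{\phi^t}\rangle=\int_{D_t}g^t\,\overline f=\int_{D_t\times D_t}K^t(z,\bar w)f(w)\overline{f(z)}=K_{fe^\phi}(t)$. In particular $K_{fe^\phi}(t)\ge 0$, and it vanishes precisely when $fe^{\phi^t}\ \bot\ A^2(D_t,e^{-\phi^t})$.

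\emph{Step 2: plurisubharmonicity when $D=D_0\times U$.} Here the assertion is exactly the case of \eqref{eq:htf}--\eqref{eq:kf} in which $f$ is replaced by $fe^{\phi^t}$: the orthogonality condition \eqref{eq:df} becomes $D'(fe^{\phi^t})=e^{\phi^t}\,\partial/\partial t\big((fe^{\phi^t})e^{-\phi^t}\big)=e^{\phi^t}\,\partial f/\partial t=0$, which holds automatically because $f$ does not depend on $t$. Thus $\tau(t)\colon h^t\mapsto\langle h^t,fe^{\phi^t}\rangle$ is a holomorphic section of the dual of $A^2(D_0,e^{-\phi^t})\times U$ with $\|\tau(t)\|^2=K_{fe^\phi}(t)$; by Theorem~\ref{th:Bern2} this bundle has nonnegative curvature, so its dual has nonpositive curvature, and since $\log$ of the squared norm of a holomorphic section of a (Griffiths) seminegative bundle is plurisubharmonic, we conclude (or $\tau\equiv 0$, giving the value $-\infty$).

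\emph{Step 3: general $D$, and the main obstacle.} For arbitrary pseudoconvex $D$ I would repeat Berndtsson's proof of Theorem~\ref{th:Bern1}: restrict to a complex line to reduce to $\dim U=1$; by the exhaustion argument of \cite{Bern06}, reduce to $D$ smoothly bounded and strictly pseudoconvex with $\phi$ smooth up to $\partial D$; and then use, in place of Theorem~\ref{th:Bern2}, its generalization to non-product $D$ from \cite{Bern09}, namely that the bundle $\{A^2(D_t,e^{-\phi^t})\}_{t\in U}$ has nonnegative curvature, and re-run Step 2 with the varying fibre. The step I expect to cost real work---and the only genuine difference from Theorem~\ref{th:Bern1}---is this last one. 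In Theorem~\ref{th:Bern1} the relevant functional is $h\mapsto h(z_0)$, a holomorphic section over $D$ that does not see the shape of $D_t$; here it is $h\mapsto\int_{D_t}h\,\overline f$, an integral over the moving fibre, so the variation of $D_t$ contributes boundary terms that must be carried through the computation. These are exactly the terms rendered nonnegative by the pseudoconvexity of $D$ (positivity of the Levi form of $\partial D$) in the curvature formula of \cite{Bern09}; alternatively one could attempt to absorb them by exhibiting $A^2(D_t,e^{-\phi^t})$ as a limit of weighted Bergman spaces over a fixed product domain containing $D$ (adding to $\phi$ a plurisubharmonic exhaustion of $D$ and letting it increase), reducing to Step 2, though making this identification precise is itself a nontrivial density question for the moving fibre.
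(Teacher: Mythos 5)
Your proposal follows essentially the same route as the paper, which in fact contains no separate proof of this theorem: it records the identity in the statement and remarks only that the result follows ``by a similar argument as in the proof of Theorem~\ref{th:Bern1}''. Your Step 1 is the correct Hilbert-space computation behind the identity $\|fe^{\phi^t}\|^2-\|u^t\|^2=K_{fe^{\phi}}(t)$, and your Step 2 is precisely the paper's discussion around \eqref{eq:htf}--\eqref{eq:kf}: with $f$ replaced by $fe^{\phi^t}$ the condition \eqref{eq:df} holds automatically because $f$ is $t$-independent, and Theorem~\ref{th:Bern2} handles the product case. The only point where you should commit differently is Step 3: the first route you suggest, quoting from \cite{Bern09} a curvature theorem for non-product, non-proper $D$, is not actually available --- as this paper itself notes, the family $A^2(D_t,e^{-\phi^t})$ need not be locally trivial, and the non-product results of \cite{Bern09} concern proper fibrations. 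The intended completion is your ``alternative'': as in \cite{Bern06}, reduce to a fixed product domain by regularizing $\phi$ and absorbing an exhaustion of $D$ into the weight; in that reduction the moving-boundary terms you worry about never appear, since with $f$ extended by zero the functional $h\mapsto\int h\bar f$ is a holomorphic section of the dual of the product bundle for the same trivial reason as in your Step 2, and the convergence of the resulting dual norms to $K_{fe^{\phi}}(t)$ is the same approximation lemma Berndtsson uses for point evaluations. With that choice made, your outline coincides with the proof the paper intends.
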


In particular, if $f$ is invariant under rotations around $z$ then $\log K_f(t)=\log K^t(z,\bar z)+C$, where $C$ is a constant that depends only on $f$. By the Oka trick of variation of the domain, Theorem~\ref{th:Bern1} follows from Theorem~\ref{th:Bern}.

Theorem~\ref{th:Bern} suggests to study variation of the full Bergman kernels $K^t(z,\bar w)$, not only the Bergman kernels on the diagonal. We shall study variation of the full Bergman kernels under deformations of complex structures.

Let $p:\mathcal X\rightarrow\mathbb D$ be a holomorphic submersion from a complex manifold $\mathcal X$ onto the unit disc $\mathbb D$. Assume that all fibres $X_t:=p^{-1}(t)$ are connected. Let $\mathcal {L}$ be a holomorphic line bundle over $\mathcal X$ equipped with a smooth metric $e^{-\phi}$. Denote by $L_t$ the restriction of $\mathcal L$ to $X_t$.  Take a locally finite covering of $\mathcal X$ by sufficiently small coordinate neighborhoods
$$
\{(t,\zeta_\alpha)=(t,\zeta_\alpha^1,\cdots,\zeta_\alpha^n): U_\alpha\rightarrow\mathbb C^{n+1}\}
$$
such that $\{\zeta_\alpha: U_\alpha\cap X_t\neq\emptyset\}$ gives the complex structure of $X_t$ and each peace $\mathcal L|_{U_\alpha}$ has a product structure $\mathcal L|_{U_\alpha}=\mathbb C\times U_\alpha$. Then
$$
e_\alpha: (t,\zeta_\alpha)\mapsto(1,t,\zeta_\alpha)
$$
defines a holomorphic section of $\mathcal L$ over $U_\alpha$. By Kodaira-Spencer's definition (see Page 46 in \cite{KSb}), the associate fibre coordinate of the local section $e_\alpha$ is the admissible fibre coordinate of $\{L_t: t\in \mathbb D\}$ on $U_\alpha$. Let $K_t$ be the canonical line bundle of $X_t$. Denote by $E$ the holomorphic line bundle over $\mathcal X$ defined by $\{d\zeta_\alpha \otimes e_\alpha \}_{\alpha}$, where $d\zeta_\alpha:=d\zeta_\alpha^1\wedge\cdots\wedge d\zeta_\alpha^n$. Then we have $E_t:=E|_{X_t}=K_t+L_t$ and the associated fibre coordinate of the local section $d\zeta_\alpha\otimes e_\alpha$ is the admissible fibre coordinate of $\{E_t: t\in \mathbb D\}$ on $U_\alpha$.

Denote by $\mathcal H(X_t,E_t)$ the space of $L^2$ holomorphic n-forms on $X_t$ with values in $L_t$. The Bergman kernel $K^t$ of $\mathcal H(X_t,E_t)$ is the integral kernel of the orthogonal projection from the space of smooth $(n,0)$-forms with values in $L_t$ onto $\mathcal H(X_t,E_t)$. It may be represented by the holomorphic section
\begin{equation}\label{eq:bk}
    \sum_j u_j(x)\otimes \overline{u_j(y)}
\end{equation}
of the pull back line bundle $E_t\boxtimes\overline {E_t}$ over $X_t\times \overline{X_t}$, where $\{u_j\}$ is a complete orthonormal base of $\mathcal H(X_t,E_t)$. Locally, one may write
\begin{equation}\label{eq:bklocal}
    K^t=K^t(\zeta_\alpha,\overline{\eta_\beta})~ d\zeta_\alpha\otimes e_\alpha \otimes \overline{d\eta_\beta\otimes e_\beta}.
\end{equation}
Then $K^t(\zeta_\alpha,\overline{\eta_\beta})$ is the admissible fibre coordinate of $K^t$. We say that $K^t$ depends smoothly on $t$ if $K^t(\zeta_\alpha,\overline{\eta_\beta})$ depends smoothly on $t$.

\medskip
\emph{Throughout this paper (unless otherwise stated), we shall denote by $(t,\zeta_\alpha), (t,\eta_\beta), (t,\mu_\gamma)$ (resp. $z_\alpha,w_\beta$) the local coordinates of $\mathcal X$ (resp. $X$).}
\medskip

The following formula
\begin{equation}\label{eq:reproducing1}
    K^t(\zeta_\alpha,\overline{\eta_\beta})=\int_{X_t}K^t(\mu_\gamma,\overline{\eta_\beta})\overline{K^t(\mu_\gamma,\overline{\zeta_\alpha})}
    e^{-\phi^t(\mu_\gamma)}i^{n^2}d\mu_\gamma\wedge \overline{d\mu_\gamma}
\end{equation}
shall play a central role in this paper. Put
\begin{equation}\label{eq:K1}
    K^{t, \overline{\zeta_\alpha}}=K^t(\mu_\gamma,\overline{\zeta_\alpha}) \ d\mu_\gamma\otimes e_\gamma\in \mathcal H (X_t, E_t), \ \forall\ (t,\zeta_\alpha)\in U_\alpha.
\end{equation}
Omit $\alpha, \beta$, we then have
\begin{equation}\label{eq:K2}
    K^t(\zeta,\bar{\eta})=i^{n^2}\int_{X_t} \{K^{t, \bar{\eta}}, K^{t, \bar{\zeta}}\}=\langle\langle K^{t, \bar{\eta}}, K^{t, \bar{\zeta}}\rangle\rangle_t,
\end{equation}
where $\{\cdot,\cdot\}$ is the canonical sesquilinear pairing. Now the reproducing formula may be written as
\begin{equation}\label{eq:reproducing2}
    u^t(\zeta)=\langle\langle u^t, K^{t, \bar{\zeta}}\rangle\rangle_t,\ \forall\ u^t\in \mathcal H(X_t,E_t).
\end{equation}
Put
$$
u_t=\frac{\partial}{\partial t} u, \ \ u_{\bar t}=\frac{\partial}{\partial \bar t} u, \ \ u_{t\bar t}=\frac{\partial^2}{\partial t\partial \bar t} u.
$$
If $||K_{\bar t}^{t,\bar\zeta}||_t<\infty$, by the reproducing formula, we get the first order local variation formula:
\begin{equation}\label{eq:firstorder}
   K_t^{t}(\zeta,\bar\eta)=\langle\langle K^{t,\bar\eta}, K_{\bar t}^{t, \bar{\zeta}}\rangle\rangle_t.
\end{equation}
If $\mathcal H:=\{\mathcal H(X_t, E_t): t\in\mathbb D\}$ is a well defined holomorphic vector bundle over $\mathbb D$, then \eqref{eq:firstorder} implies that $D_tK^{t,\bar\eta}=0$, where $D_t dt$ is the $(1,0)$-component of the Chern connection on $\mathcal H$. Thus
\begin{equation}\label{eq:secondorder}
    K_{t\bar t}^{t}(\zeta,\bar\eta)=\langle\langle K_{\bar t}^{t,\bar\eta}, K_{\bar t}^{t, \bar{\zeta}}\rangle\rangle_t+ \langle\langle K^{t,\bar\eta}, \Theta K^{t, \bar{\zeta}} \rangle\rangle_t
    =\langle\langle K_{\bar t}^{t,\bar\eta}, K_{\bar t}^{t, \bar{\zeta}}\rangle\rangle_t+\Theta K^{t, \bar{\eta}}(\zeta),
\end{equation}
where $\Theta:=[D_t,\partial/\partial \bar t~ ]$ is the curvature of $\mathcal H$. Hence the second order local variation formula follows from the curvature formula of $\mathcal H$.

In general, as explained in \cite{Bern09}, $\mathcal H$ is not locally trivial. Inspired by \cite{Maitani84} and \cite{Sch12}, we shall use the Lie derivative to compute the variation.

\begin{definition}\label{de:vector} Let $\Psi:M\rightarrow N$ be a smooth submersion of differential manifolds. Let $V$ be a smooth real vector field on $M$. Assume that $\Psi_*(V(y))\equiv V_N(x),  \ \forall\ y\in \Psi^{-1}(x),\  \forall \ x \in N$. $V$ is said to be $\Psi$-admissible on $\Psi^{-1}(x_0)$ if there exists a diffeomorphism $\Phi: \gamma\times \Psi^{-1}(x_0)\rightarrow \Psi^{-1}(\gamma)$ such that $\Psi\circ\Phi$ is the canonical projection and $\Phi_*(V_N)=V$ on $\Psi^{-1}(\gamma)$, where $\gamma$ is an integral curve of $V_N$ passing through $x_0$.
\end{definition}

Let $V$ be a $p$-admissible smooth $(1,0)$-vector field on $\mathcal X$ (i.e. both ${\rm Re}V$ and ${\rm Im}V$ are $p$-admissible) such that $p_*V=\partial/\partial t$. Denote by $C^{\infty}_{\bullet,\bullet}(X_t,L_t)$ the graded algebra of smooth forms on $X_t$ with values in $L_t$. Let $u^t\in \mathcal C^{\infty}_{\bullet,\bullet}(X_t,L_t)$ whose admissible fibre coordinates depend smoothly on $t$. Let $i_t:X_t\hookrightarrow\mathcal X$ be the inclusion mapping. Then
\begin{equation}\label{eq:Liederivative}
    \mathcal L^t_{V,\phi}u^t:=i_t^*\left(e^{\phi}\mathcal L_{V}(e^{-\phi}u^t)\right),  \ \ \mathcal L^t_{\overline{V}}u^t:=i_t^*\left(\mathcal L_{\overline{V}}u^t\right),
\end{equation}
are globally well defined on $X_t$, where $\mathcal L_{V}, \mathcal L_{\overline{V}}$ are the usual Lie derivatives. What's more, if $u^t$ are holomorphic $p$-forms with values in $L_t$, then $\mathcal L^t_{\overline{V}}u^t=u_{\bar t}^t$. If $V$ is integrable, i.e. $[V,\overline{V}]=0$, then $\phi_{V\overline{V}}:=V\overline{V}\phi=\overline{V}V \phi$ is also globally well defined.

The complex Lie derivative $\mathcal L_V^\C$ introduced by Berndtsson (see Page 466 in \cite{Bern09a}) is defined as follows:
\begin{equation*}
    \mathcal L_V^\C:=\partial\delta_V+\delta_V \partial,
\end{equation*}
where $\delta_V$ means contraction of a form with a vector field. Put $\op^t=\op|_{X_t}, \ \partial^t=\partial|_{X_t}$. By Cartan's formula, we have
\begin{equation}\label{eq:relation1}
    \mathcal L^t_{V,\phi}u^t=\mathcal L^{t,\C}_{V,\phi}u^t+\delta_{\op^t V}u^t,
\end{equation}
where $\mathcal L^{t,\C}_{V,\phi}u^t:=i_t^*\left(e^{\phi}\mathcal L^\C_{V}(e^{-\phi}u^t)\right)$. Notice that $\op^t V$ is a representative of the Kodaira-Spencer class of $X_t\hookrightarrow \mathcal X$.

In order to compute the variation of the Bergman kernels, we have to assume that $K^t$ is sufficiently regular. Put
$$
A_0^t=\{K^{t,\bar\eta},K^{t,\bar\zeta}\}, \ \ A_1^t=\{\mathcal L^t_{V,\phi}K^{t,\bar\eta},K^{t,\bar\zeta}\}, \ \ A_2^t=\{K_{\bar t}^{t,\bar\eta},K^{t,\bar\zeta}\},
$$
we need two assumptions: first order conditions
\begin{equation}\label{eq:main1}
       \frac{\partial}{\partial t}\int_{X_t}A_0^t=\int_{X_t}\mathcal L^t_V A_0^t, \ \
        ||\mathcal L^t_{V,\phi}K^{t,\bar\zeta}||_t<\infty, \ ||K_{\bar t}^{t,\bar\zeta}||_t<\infty,
\end{equation}
and second order conditions
\begin{equation}\label{eq:main2}
    \frac{\partial}{\partial t}\int_{X_t}A_j^t=\int_{X_t}\mathcal L^t_V A_j^t, \ j=1,2, \ \
    ||\mathcal L^t_{V,\phi}K_{\bar t}^{t,\bar\zeta}||_t<\infty, \ ||\mathcal L^t_{\overline{V}}\mathcal L^t_{V,\phi}K^{t,\bar\zeta}||_t<\infty.
\end{equation}
Our main theorem (generalization of (2.3) in \cite{Bern06}) can be stated as follows:

\begin{theorem}[Main Theorem]\label{th:Main} Let $V$ be a $p$-admissible smooth $(1,0)$-vector field on $\mathcal X$ such that $p_*V=\partial/\partial t$. If \eqref{eq:main1} is satisfied, then $\mathcal L^{t,\C}_{V,\phi}K^{t,\bar\zeta}\bot \mathcal H (X_t, E_t)$. If both \eqref{eq:main1} and \eqref{eq:main2} are satisfied, then we have the variation formula
\begin{eqnarray*}
% \nonumber to remove numbering (before each equation)
  K_{t\bar t}^t(\zeta,\bar\eta) &=& \langle\langle K_{\bar t}^{t,\bar\eta}, K_{\bar t}^{t,\bar\zeta}\rangle\rangle_t
  +\langle\langle K^{t,\bar\eta}, \big[\mathcal L^t_{V,\phi}, \mathcal L^t_{\overline{V}}\big]K^{t,\bar\zeta} \rangle\rangle_t\\
   & & \ \ \ \ \ \ \ \ \ \ \ \ \ \ \ \ \ \ \  - \ i^{n^2}\int_{X_t}\{\delta_{\op^t V}K^{t,\bar\eta}, \delta_{\op^t V}K^{t,\bar\zeta}\}- \langle\langle \mathcal L^{t,\C}_{V,\phi}K^{t,\bar\eta}, \mathcal L^{t,\C}_{V,\phi}K^{t,\bar\zeta} \rangle\rangle_t.
\end{eqnarray*}
\end{theorem}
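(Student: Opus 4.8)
The idea is to differentiate the reproducing identity $K^t(\zeta,\bar\eta)=\langle\langle K^{t,\bar\eta},K^{t,\bar\zeta}\rangle\rangle_t=i^{n^2}\int_{X_t}A_0^t$ twice, pushing each $\partial/\partial t$ or $\partial/\partial\bar t$ inside the integral as a Lie derivative (legitimate by \eqref{eq:main1}--\eqref{eq:main2} and their conjugates, the latter coming from $\overline{K^t(\zeta,\bar\eta)}=K^t(\eta,\bar\zeta)$) and expanding by the Leibniz rules
\begin{gather*}
\mathcal L^t_V\{u^t,v^t\}=\{\mathcal L^t_{V,\phi}u^t,v^t\}+\{u^t,\mathcal L^t_{\overline V}v^t\},\\
\mathcal L^t_{\overline V}\{u^t,v^t\}=\{\mathcal L^t_{\overline V}u^t,v^t\}+\{u^t,\mathcal L^t_{V,\phi}v^t\},
\end{gather*}
which follow from $\{u,v\}=i^{n^2}u\wedge\overline v\,e^{-\phi}$ together with $e^{\phi}\mathcal L_V(e^{-\phi}\cdot)=\mathcal L_V-V\phi$. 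Two remarks are used throughout: a pairing $\{w,v\}$ with $w$ of type $(n,0)$ only sees the $(n,0)$-component of $v$; and $\mathcal L^t_{\overline V}$ restricted to holomorphic forms equals $\partial/\partial\bar t$, so in particular $\mathcal L^t_{\overline V}K^{t,\bar\zeta}=K_{\bar t}^{t,\bar\zeta}$.

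\emph{First order.} Differentiating $K^t(\zeta,\bar\eta)=i^{n^2}\int_{X_t}A_0^t$ in $t$ and using \eqref{eq:main1} and the first Leibniz rule,
\begin{equation*}
K^t_t(\zeta,\bar\eta)=i^{n^2}\!\int_{X_t}\{\mathcal L^t_{V,\phi}K^{t,\bar\eta},K^{t,\bar\zeta}\}+\langle\langle K^{t,\bar\eta},\mathcal L^t_{\overline V}K^{t,\bar\zeta}\rangle\rangle_t .
\end{equation*}
Here $\mathcal L^t_{\overline V}K^{t,\bar\zeta}=K_{\bar t}^{t,\bar\zeta}$, so the last term equals $K^t_t(\zeta,\bar\eta)$ by \eqref{eq:firstorder}; hence $i^{n^2}\int_{X_t}\{\mathcal L^t_{V,\phi}K^{t,\bar\eta},K^{t,\bar\zeta}\}=0$, and dropping the $(n-1,1)$-part $\delta_{\op^t V}K^{t,\bar\eta}$ we get $\langle\langle\mathcal L^{t,\C}_{V,\phi}K^{t,\bar\eta},K^{t,\bar\zeta}\rangle\rangle_t=0$ for every $\zeta$. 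Since the kernel sections $\{K^{t,\bar\zeta}\}$ are total in $\mathcal H(X_t,E_t)$ (if $u^t\perp K^{t,\bar\zeta}$ for all $\zeta$ then $u^t(\zeta)=0$ by the reproducing property), this proves the first assertion. Running the same argument on the identically vanishing function $t\mapsto\langle\langle u^t,w^t\rangle\rangle_t$ yields the auxiliary identity
\begin{equation*}
\langle\langle u^t,\mathcal L^t_{\overline V}w^t\rangle\rangle_t=-\langle\langle\mathcal L^{t,\C}_{V,\phi}u^t,w^t\rangle\rangle_t ,
\end{equation*}
valid whenever $u^t\in\mathcal H(X_t,E_t)$ and $w^t$ is an $(n,0)$-form with $w^t\perp\mathcal H(X_t,E_t)$ for all $t$.

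\emph{Second order.} Write \eqref{eq:firstorder} as $K^t_t(\zeta,\bar\eta)=i^{n^2}\int_{X_t}\{K^{t,\bar\eta},K_{\bar t}^{t,\bar\zeta}\}$, differentiate in $\bar t$ and apply the second Leibniz rule (using $\mathcal L^t_{\overline V}K^{t,\bar\eta}=K_{\bar t}^{t,\bar\eta}$):
\begin{equation*}
K^t_{t\bar t}(\zeta,\bar\eta)=\langle\langle K_{\bar t}^{t,\bar\eta},K_{\bar t}^{t,\bar\zeta}\rangle\rangle_t+\langle\langle K^{t,\bar\eta},\mathcal L^t_{V,\phi}K_{\bar t}^{t,\bar\zeta}\rangle\rangle_t ,
\end{equation*}
the first summand being the first term of the asserted formula. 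In the second, substitute $K_{\bar t}^{t,\bar\zeta}=\mathcal L^t_{\overline V}K^{t,\bar\zeta}$ and use $\mathcal L^t_{V,\phi}\mathcal L^t_{\overline V}=[\mathcal L^t_{V,\phi},\mathcal L^t_{\overline V}]+\mathcal L^t_{\overline V}\mathcal L^t_{V,\phi}$; the commutator gives exactly $\langle\langle K^{t,\bar\eta},[\mathcal L^t_{V,\phi},\mathcal L^t_{\overline V}]K^{t,\bar\zeta}\rangle\rangle_t$, so it remains to evaluate $\langle\langle K^{t,\bar\eta},\mathcal L^t_{\overline V}\mathcal L^t_{V,\phi}K^{t,\bar\zeta}\rangle\rangle_t$. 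Decompose $\mathcal L^t_{V,\phi}K^{t,\bar\zeta}=\mathcal L^{t,\C}_{V,\phi}K^{t,\bar\zeta}+\delta_{\op^t V}K^{t,\bar\zeta}$ by \eqref{eq:relation1}. The first piece is an $(n,0)$-form orthogonal to $\mathcal H(X_t,E_t)$ by the first-order part, so the auxiliary identity yields $\langle\langle K^{t,\bar\eta},\mathcal L^t_{\overline V}\mathcal L^{t,\C}_{V,\phi}K^{t,\bar\zeta}\rangle\rangle_t=-\langle\langle\mathcal L^{t,\C}_{V,\phi}K^{t,\bar\eta},\mathcal L^{t,\C}_{V,\phi}K^{t,\bar\zeta}\rangle\rangle_t$, the last term of the formula. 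For the second piece, use the $\overline V$-analogue of \eqref{eq:relation1}, $\mathcal L^t_{\overline V}=\mathcal L^{t,\C}_{\overline V}+\delta_{\overline{\op^t V}}$ with $\mathcal L^{t,\C}_{\overline V}=\op^t\delta_{\overline V}+\delta_{\overline V}\op^t$: since $\delta_{\op^t V}K^{t,\bar\zeta}$ is $\op^t$-closed ($K^{t,\bar\zeta}$ is holomorphic and $\op^t V$ represents a $\op^t$-class), $\mathcal L^{t,\C}_{\overline V}\delta_{\op^t V}K^{t,\bar\zeta}=\op^t\big(\delta_{\overline V}\delta_{\op^t V}K^{t,\bar\zeta}\big)$ is $\op^t$-exact of type $(n-1,1)$ and pairs trivially with $K^{t,\bar\eta}$, whereas $\delta_{\overline{\op^t V}}\delta_{\op^t V}K^{t,\bar\zeta}$ has type $(n,0)$ and obeys the pointwise identity
\begin{equation*}
\{K^{t,\bar\eta},\delta_{\overline{\op^t V}}\delta_{\op^t V}K^{t,\bar\zeta}\}=-\{\delta_{\op^t V}K^{t,\bar\eta},\delta_{\op^t V}K^{t,\bar\zeta}\}
\end{equation*}
(which follows because $\delta_{\op^t V}$ is an even derivation and $K^{t,\bar\eta}\wedge\delta_{\overline{\op^t V}}\overline{K^{t,\bar\zeta}}$ vanishes for bidegree reasons). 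Integrating this produces the remaining term $-i^{n^2}\int_{X_t}\{\delta_{\op^t V}K^{t,\bar\eta},\delta_{\op^t V}K^{t,\bar\zeta}\}$, and summing the four contributions gives the formula.

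\emph{Where the work is.} The algebra above becomes routine once two things are secured, and those are the real content. First, every passage of $\partial/\partial t$, $\partial/\partial\bar t$ through $\int_{X_t}$, and the mere well-definedness of the iterated operators $\mathcal L^t_{\overline V}\mathcal L^t_{V,\phi}K^{t,\bar\zeta}$ and $\mathcal L^t_{V,\phi}K_{\bar t}^{t,\bar\zeta}$, must be justified; this is precisely the purpose of the regularity assumptions \eqref{eq:main1}--\eqref{eq:main2}, and one also has to harvest their conjugate versions. Second, and conceptually the crux, is the treatment of the Kodaira--Spencer contribution: identifying $\mathcal L^t_{\overline V}\delta_{\op^t V}K^{t,\bar\zeta}$ via Cartan's formula and reducing it to the quadratic expression $\{\delta_{\op^t V}K^{t,\bar\eta},\delta_{\op^t V}K^{t,\bar\zeta}\}$ --- the step in which the complex Lie derivative of \cite{Bern09a} and the method of \cite{Sch12} are indispensable.
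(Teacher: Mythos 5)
Your argument is correct and follows the paper's strategy: differentiate the reproducing identity \eqref{eq:K2}, push $t$- and $\bar t$-derivatives inside the fibre integral as Lie derivatives using \eqref{eq:main1}--\eqref{eq:main2} (and their conjugates), expand with \eqref{eq:3Lie}, and invoke \eqref{eq:firstorder} and \eqref{eq:relation1}; your first-order step and the orthogonality statement are exactly the paper's. The only divergence is in how the last two quadratic terms are extracted. The paper keeps the full operator $\mathcal L^t_{V,\phi}$ throughout: it differentiates the orthogonality relation $\int_{X_t}\{K^{t,\bar\eta},\mathcal L^t_{V,\phi}K^{t,\bar\zeta}\}=0$ to convert $\langle\langle K^{t,\bar\eta},\mathcal L^t_{\overline V}\mathcal L^t_{V,\phi}K^{t,\bar\zeta}\rangle\rangle_t$ into $-i^{n^2}\int_{X_t}\{\mathcal L^t_{V,\phi}K^{t,\bar\eta},\mathcal L^t_{V,\phi}K^{t,\bar\zeta}\}$, and only at the very end splits this by bidegree via \eqref{eq:relation1} into the $\mathcal L^{t,\C}_{V,\phi}$-term and the $\delta_{\op^t V}$-term. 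You split via \eqref{eq:relation1} first: the $(n,0)$-piece is handled by the same differentiation trick (your auxiliary identity is precisely the paper's step specialized to $w^t=\mathcal L^{t,\C}_{V,\phi}K^{t,\bar\zeta}$, and it consumes the same conjugated $A_1^t$-hypothesis from \eqref{eq:main2}), while the $(n-1,1)$-piece is disposed of pointwise through the conjugate Cartan decomposition of $\mathcal L^t_{\overline V}$ and the algebraic identity $\{K^{t,\bar\eta},\delta_{\overline{\op^t V}}\delta_{\op^t V}K^{t,\bar\zeta}\}=-\{\delta_{\op^t V}K^{t,\bar\eta},\delta_{\op^t V}K^{t,\bar\zeta}\}$. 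This buys a pointwise, rather than merely integrated, identification of the Kodaira--Spencer contribution, at the cost of an extra decomposition the paper does not need. One small inaccuracy: $\mathcal L^{t,\C}_{\overline V}\delta_{\op^t V}K^{t,\bar\zeta}$ is not exactly $\op^t\bigl(\delta_{\overline V}\delta_{\op^t V}K^{t,\bar\zeta}\bigr)$; it also contains the term $\partial\bigl(\delta_{\op^t V}K^{t,\bar\zeta}\bigr)/\partial\bar t$ coming from $\delta_{\overline V}$ hitting the $d\bar t$-component of $\op$ on the total space. Since that term is again of type $(n-1,1)$ it pairs to zero with the $(n,0)$-form $K^{t,\bar\eta}$, so your conclusion (only the $(n,0)$-component $\delta_{\overline{\op^t V}}\delta_{\op^t V}K^{t,\bar\zeta}$ survives in the pairing) and the final formula are unaffected.
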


If $p$ is proper, then every smooth $(1,0)$-vector field is $p$-admissible. Thus for compact case, it suffices to assume that $\dim_{\C} H^0(X_t,E_t)$ is a constant.

Our main theorem implies that $\mathcal L^{t,\C}_{V,\phi}K^{t,\bar\eta}$ is the minimal solution of the following equation
\begin{equation}\label{eq:d-barLV}
    -\op^t \left(\mathcal L^{t,\C}_{V,\phi}K^{t,\bar\eta}\right)=\partial^t_{\phi}\left(\delta_{\op^t V}K^{t,\bar\eta}\right)+\left(\op^t \phi\right)_V\wedge K^{t,\bar\eta},
\end{equation}
where $\partial^t_{\phi}(\cdot):=e^{\phi}\partial^t(e^{-\phi}\cdot)$ and $\left(\op^t \phi\right)_V:=V\left(\op^t\phi\right)$. If $\Theta_{L_t}:=i\partial^t\op^t\phi^t>0$, then
\begin{equation}\label{eq:har-vec}
    \left(\op^t \phi\right)_V=0 \Leftrightarrow V=\frac{\partial}{\partial t}-\sum\phi_{t\bar k}\phi^{\bar k j}\frac{\partial}{\partial \mu^j},  \  \phi_{t\bar k}:=\frac{\partial^2\phi}{\partial t\partial \bar\mu^k}, \ (\phi^{\bar k j})=(\phi_{j \bar k})^{-1}.
\end{equation}
Put
\begin{equation}\label{eq:vc}
    V_{\phi}=\frac{\partial}{\partial t}-\sum\phi_{t\bar k}\phi^{\bar k j}\frac{\partial}{\partial \mu^j}, \
    c(\phi)=\phi_{t\bar t}-\sum \phi_{t\bar k} \phi_{\bar t j}\phi^{\bar k j}.
\end{equation}
We have $\delta_{\op^t V_{\phi}}\Theta_{L_t}=0$ and $\delta_{\op^t V_{\phi}}K^{t,\bar\eta}$ is $\Theta_{L_t}$-primitive (see \cite{Bern11} and Lemma 4 in \cite{Sch12}).

By Theorem~\ref{th:Main}, we shall give another proof of the following corollary without using \eqref{eq:secondorder}) and the curvature formula in \cite{Bern11}.

\begin{corollary}\label{co:compactphi}  If $p$ is proper and $\Theta_{L_t}>0, \ \forall \ t\in\mathbb D$, then we have
\begin{eqnarray*}
% \nonumber to remove numbering (before each equation)
   K_{t\bar t}^t(\zeta,\bar\eta) &=& \langle\langle K_{\bar t}^{t,\bar\eta}, K_{\bar t}^{t,\bar\zeta}\rangle\rangle_t
  +\langle\langle c(\phi)K^{t,\bar\eta}, K^{t,\bar\zeta}\rangle\rangle_t \\
   & &  \ \ \ \ \ \ \ \ \ \ \ \ \ \ \ \ \ \ \ \ \ \ \ \ \ + \ \langle\langle \left(\square'+1\right)^{-1}\delta_{\op^t V_{\phi}}K^{t,\bar\eta}, \delta_{\op^t V_{\phi}}K^{t,\bar\zeta}\rangle\rangle_{\Theta_{L_t}},
\end{eqnarray*}
where $\square'$ is the $\partial^t_{\phi}$-Laplace with respect to $\Theta_{L_t}$ and $\phi^t$.
\end{corollary}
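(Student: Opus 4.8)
The plan is to derive Corollary~\ref{co:compactphi} from the Main Theorem by specializing the $p$-admissible vector field $V$ to the canonical lift $V_\phi$ defined in \eqref{eq:vc}. Since $p$ is proper, every smooth $(1,0)$-vector field is $p$-admissible, and $\dim_\C H^0(X_t,E_t)$ is constant when $\Theta_{L_t}>0$ by Kodaira vanishing and Riemann--Roch, so the regularity assumptions \eqref{eq:main1} and \eqref{eq:main2} hold automatically (compactness makes all the integrals and norms finite, and differentiation under the integral sign over a compact fibre is justified by the smooth dependence of the admissible fibre coordinates on $t$). Hence the Main Theorem applies verbatim with $V=V_\phi$, and I have
\begin{equation*}
  K_{t\bar t}^t(\zeta,\bar\eta) = \langle\langle K_{\bar t}^{t,\bar\eta}, K_{\bar t}^{t,\bar\zeta}\rangle\rangle_t
  +\langle\langle K^{t,\bar\eta}, [\mathcal L^t_{V_\phi,\phi}, \mathcal L^t_{\overline{V_\phi}}]K^{t,\bar\zeta}\rangle\rangle_t
  - i^{n^2}\!\!\int_{X_t}\{\delta_{\op^t V_\phi}K^{t,\bar\eta}, \delta_{\op^t V_\phi}K^{t,\bar\zeta}\}
  - \langle\langle \mathcal L^{t,\C}_{V_\phi,\phi}K^{t,\bar\eta}, \mathcal L^{t,\C}_{V_\phi,\phi}K^{t,\bar\zeta}\rangle\rangle_t.
\end{equation*}
The task is then to identify the last three terms with the two terms appearing in the Corollary.

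First I would compute the commutator term. For the choice $V_\phi$ one has $[V_\phi,\overline{V_\phi}]=0$ (integrability of the canonical lift, which also makes $\phi_{V_\phi\overline{V_\phi}}$ globally defined), so the Lie-derivative commutator acting on an $(n,0)$-form with values in $L_t$ reduces, via \eqref{eq:Liederivative} and the product rule for $e^{\pm\phi}$, to multiplication by the scalar $\phi_{V_\phi\overline{V_\phi}}$. A direct coordinate computation using \eqref{eq:vc} gives $\phi_{V_\phi\overline{V_\phi}} = \phi_{t\bar t} - \sum\phi_{t\bar k}\phi^{\bar k j}\phi_{j\bar t} = c(\phi)$ — this is exactly the geodesic-curvature scalar of Berndtsson. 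Thus the commutator term becomes $\langle\langle c(\phi)K^{t,\bar\eta}, K^{t,\bar\zeta}\rangle\rangle_t$, which is the second term of the Corollary.

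Next I would merge the two remaining (negative) terms into the single positive term of the Corollary using the $\op$-equation \eqref{eq:d-barLV}. When $V=V_\phi$ we have $(\op^t\phi)_{V_\phi}=0$ by \eqref{eq:har-vec}, so \eqref{eq:d-barLV} says $\mathcal L^{t,\C}_{V_\phi,\phi}K^{t,\bar\eta}$ is the minimal solution of $-\op^t w = \partial^t_\phi(\delta_{\op^t V_\phi}K^{t,\bar\eta})$. Moreover $g:=\delta_{\op^t V_\phi}K^{t,\bar\eta}$ is an $(n-1,1)$-form that is $\Theta_{L_t}$-primitive (cited fact), so by the Bochner--Kodaira--Nakano identity on the compact Kähler (with respect to $\Theta_{L_t}$) fibre $X_t$ the forms $\partial^t_\phi g$ and the minimal solution $w=\mathcal L^{t,\C}_{V_\phi,\phi}K^{t,\bar\eta}$ are controlled by the $\partial_\phi^t$-Laplacian $\square'$: writing $g = \square'(\square')^{-1}g$ on the orthogonal complement of harmonic forms and using that $w\bot\mathcal H(X_t,E_t)$ (the first conclusion of the Main Theorem), one gets the operator identity $-i^{n^2}\int\{g_\eta,g_\zeta\} - \langle\langle w_\eta,w_\zeta\rangle\rangle = \langle\langle(\square'+1)^{-1}g_\eta, g_\zeta\rangle\rangle_{\Theta_{L_t}}$, after accounting for the two different Hodge inner products (the $L^2$ inner product on $X_t$ coming from $i^{n^2}\int\{\cdot,\cdot\}e^{-\phi}$ versus the $\Theta_{L_t}$-inner product $\langle\langle\cdot,\cdot\rangle\rangle_{\Theta_{L_t}}$). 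I expect this bookkeeping — carefully matching the primitivity relation, the two inner products, and the shift by $1$ in $(\square'+1)^{-1}$ — to be the main obstacle; it is essentially the same algebra as in Berndtsson's curvature formula, and the cleanest route is to invoke that formula's local form directly, or to reproduce its short Hodge-theoretic argument. Substituting the three identifications back into the Main Theorem formula yields the stated expression, completing the proof.
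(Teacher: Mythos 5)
Your overall strategy --- apply the Main Theorem with $V=V_\phi$, identify the commutator term with $c(\phi)$, and merge the last two terms Hodge-theoretically --- is the same as the paper's, but your first step rests on a false claim: the horizontal lift $V_\phi$ is \emph{not} integrable in general. The bracket $[V_\phi,\overline{V_\phi}]$ is a vertical vector field that vanishes only in special situations (for instance when $(i\partial\op\phi)^{2}=0$, which is precisely why integrability is available in the setting of Theorem~\ref{th:lastphi}, where $\phi$ is maximal); for a generic $\phi$ with $\Theta_{L_t}>0$ it is nonzero. Consequently your pointwise identity $\phi_{V_\phi\overline{V_\phi}}=c(\phi)$ also fails: one has $\overline{V_\phi}V_\phi\phi=c(\phi)-\delta_{[V_\phi,\overline{V_\phi}]}\partial^t\phi$. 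The paper's Lemma~\ref{le:lee} is exactly the correct replacement: for any admissible $V$ with $(\op^t\phi)_V=0$, the $(n,0)$-part of $\big[\mathcal L^t_{V,\phi},\mathcal L^t_{\overline V}\big]K^{t,\bar\zeta}$ equals $c(\phi)K^{t,\bar\zeta}+\partial^t_\phi\delta_{[V,\overline V]}K^{t,\bar\zeta}$, and the extra $\partial^t_\phi$-exact term disappears only after pairing with the holomorphic form $K^{t,\bar\eta}$ over the compact fibre (Stokes). So your conclusion for the commutator term is correct, but only because of a cancellation your argument never performs.

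In the second step you list the right ingredients ($(\op^t\phi)_{V_\phi}=0$, equation \eqref{eq:d-barLV}, orthogonality of $\mathcal L^{t,\C}_{V_\phi,\phi}K^{t,\bar\eta}$ to $\mathcal H(X_t,E_t)$, primitivity of $\delta_{\op^t V_\phi}K^{t,\bar\eta}$), but the decisive operator identity is asserted rather than proved; in particular you never explain where the shift ``$+1$'' comes from. The paper's chain is: $\mathcal L^{t,\C}_{V_\phi,\phi}K^{t,\bar\eta}=-(\op^t)^*(\square'')^{-1}\partial^t_\phi\big(\delta_{\op^t V_\phi}K^{t,\bar\eta}\big)$; the Nakano identity with K\"ahler metric $\Theta_{L_t}$ and weight $\phi^t$ gives $\square''=\square'+1$ on the relevant forms; $(\square'+1)^{-1}$ commutes with $\partial^t_\phi$; and $(\partial^t_\phi)^*\delta_{\op^t V_\phi}K^{t,\bar\zeta}=0$ because the primitive $(n-1,1)$-form is carried to itself up to a constant by $*$ and is $\op^t$-closed. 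Combined with the fact that $-i^{n^2}\int_{X_t}\{\cdot,\cdot\}$ equals $\langle\langle\cdot,\cdot\rangle\rangle_{\Theta_{L_t}}$ on primitive $(n-1,1)$-forms and the resolvent identity $1-\square'(\square'+1)^{-1}=(\square'+1)^{-1}$, this produces the stated third term. Without the two facts $\square''=\square'+1$ and $(\partial^t_\phi)^*\delta_{\op^t V_\phi}K^{t,\bar\zeta}=0$, your ``bookkeeping'' does not close; as written, the proposal therefore has a genuine gap in both halves, even though the intended route coincides with the paper's.
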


Assume that $\mathcal X$ possesses a K\"ahler metric $\omega$. By Lemma 4.1 in \cite{Bern11}, there exist only one $V_{\omega}$ such that $p_*V_{\omega}=\partial/\partial t$ and $\delta_{V_\omega}\omega=c(\omega)d\bar t$, where $c(\omega)$ satisfies
\begin{equation}\label{eq:comega}
    \frac{\omega^{n+1}}{(n+1)!}=c(\omega)\frac{\omega^{n}}{n!}\wedge idt\wedge d\bar t.
\end{equation}
Let $T_{\omega_t}^{t,\bar \eta}$ be the harmonic part of $\delta_{\op^t V_{\omega}}K^{t,\bar\eta}$ with respect to K\"ahler metric $\omega_t:=\omega|_{X_t}$. The following corollary is due to Griffiths \cite{Griffiths84} (Berndtsson gave a new proof in \cite{Bern11}), but we shall also discuss how it follows from our main theorem.

\begin{corollary}\label{co:compact}  Assume that $\mathcal X$ is a K\"ahler manifold. If $p$ is proper and $\mathcal L$ is trivial, then we have
\begin{equation*}
 K_{t\bar t}^t(\zeta,\bar\eta) = \langle\langle K_{\bar t}^{t,\bar\eta}, K_{\bar t}^{t,\bar\zeta}\rangle\rangle_t
 + \langle\langle T_{\omega_t}^{t,\bar \eta}, T_{\omega_t}^{t,\bar \zeta}\rangle\rangle_{\omega_t}.
\end{equation*}
\end{corollary}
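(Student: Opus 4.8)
The plan is to deduce Corollary~\ref{co:compact} from the Main Theorem (Theorem~\ref{th:Main}) by specializing the data and checking that the general variation formula collapses to the asserted one; this in passing re-derives Griffiths' formula from our computation. Since $\mathcal L$ is holomorphically trivial we give it a flat metric, so $\phi\equiv 0$; then $E_t=K_t$, $\mathcal H(X_t,E_t)=H^0(X_t,K_t)$, the inner product $\langle\langle u,v\rangle\rangle_t=i^{n^2}\int_{X_t}u\wedge\overline v$ is the intrinsic $L^2$ product on holomorphic $n$-forms, and — classically — it agrees with the $\omega_t$-inner product on $(n,0)$-forms. As $p$ is proper, every smooth $(1,0)$-vector field is $p$-admissible, and we take $V=V_\omega$. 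The hypotheses \eqref{eq:main1}--\eqref{eq:main2} hold once $\dim_{\C}H^0(X_t,K_t)$ is locally constant, which is the case here: the fibres $X_t$, being submanifolds of the Kähler manifold $\mathcal X$, are compact Kähler, so their Hodge numbers (in particular $\dim H^0(X_t,K_t)=h^{n,0}(X_t)$) are deformation invariant. Finally, with $\phi\equiv 0$ equation \eqref{eq:d-barLV} reads $-\op^t\big(\mathcal L^{t,\C}_{V_\omega,\phi}K^{t,\bar\eta}\big)=\partial^t\big(\delta_{\op^t V_\omega}K^{t,\bar\eta}\big)$.

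The next step is Hodge theory on the compact Kähler fibre $X_t$. Because $\delta_{\op^t V_\omega}\omega_t=0$ (the Kähler analogue of $\delta_{\op^t V_\phi}\Theta_{L_t}=0$, a consequence of $\delta_{V_\omega}\omega=c(\omega)d\bar t$ and the Kähler condition; see \cite{Bern11}), the $(n-1,1)$-form $\delta_{\op^t V_\omega}K^{t,\bar\eta}$ is $\omega_t$-primitive; it is also $\op^t$-closed, since $\op^t V_\omega$ is a $\op^t$-closed representative of the Kodaira--Spencer class and $K^{t,\bar\eta}$ is holomorphic. Hence $\delta_{\op^t V_\omega}K^{t,\bar\eta}=T^{t,\bar\eta}_{\omega_t}+\op^t\gamma_\eta$ with $T^{t,\bar\eta}_{\omega_t}$ its (primitive) harmonic part and $\gamma_\eta$ a smooth $(n-1,0)$-form, $\op^t\gamma_\eta$ again primitive. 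First, the Hodge--Riemann bilinear relations identify $-i^{n^2}\int_{X_t}\{\alpha,\beta\}$ with $\langle\langle\alpha,\beta\rangle\rangle_{\omega_t}$ for primitive $(n-1,1)$-forms, so $-i^{n^2}\int_{X_t}\{\delta_{\op^t V_\omega}K^{t,\bar\eta},\delta_{\op^t V_\omega}K^{t,\bar\zeta}\}=\langle\langle T^{t,\bar\eta}_{\omega_t},T^{t,\bar\zeta}_{\omega_t}\rangle\rangle_{\omega_t}+\langle\langle\op^t\gamma_\eta,\op^t\gamma_\zeta\rangle\rangle_{\omega_t}$ by orthogonality of the Hodge pieces. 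Second, since $\partial^t T^{t,\bar\eta}_{\omega_t}=0$ (harmonic forms on a compact Kähler manifold are $d$-closed), the $\phi\equiv0$ form of \eqref{eq:d-barLV} gives $\op^t\big(\mathcal L^{t,\C}_{V_\omega,\phi}K^{t,\bar\eta}-\partial^t\gamma_\eta\big)=0$; thus $\mathcal L^{t,\C}_{V_\omega,\phi}K^{t,\bar\eta}-\partial^t\gamma_\eta$ is a holomorphic $n$-form, and it is $L^2$-orthogonal to $H^0(X_t,K_t)$ — the term $\mathcal L^{t,\C}_{V_\omega,\phi}K^{t,\bar\eta}$ by Theorem~\ref{th:Main}, the term $\partial^t\gamma_\eta$ since $\partial^t\overline v=0$ for holomorphic $v$ and $\partial^t$ agrees with $d$ on $(n-1,n)$-forms, so Stokes' theorem gives $\int_{X_t}\partial^t\gamma_\eta\wedge\overline v=0$ — hence it vanishes: $\mathcal L^{t,\C}_{V_\omega,\phi}K^{t,\bar\eta}=\partial^t\gamma_\eta$. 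Substituting these identities and the Kähler identity $\|\op^t\gamma_\eta\|_{\omega_t}^2=\|\partial^t\gamma_\eta\|_{\omega_t}^2+\|\partial^{t*}\gamma_\eta\|_{\omega_t}^2$ (valid since $\op^{t*}\gamma_\eta=0$) into the formula of Theorem~\ref{th:Main}, and using that $\langle\langle\cdot,\cdot\rangle\rangle_t=\langle\langle\cdot,\cdot\rangle\rangle_{\omega_t}$ on $(n,0)$-forms, the variation formula collapses to
\begin{eqnarray*}
K^t_{t\bar t}(\zeta,\bar\eta) &=& \langle\langle K^{t,\bar\eta}_{\bar t},K^{t,\bar\zeta}_{\bar t}\rangle\rangle_t+\langle\langle T^{t,\bar\eta}_{\omega_t},T^{t,\bar\zeta}_{\omega_t}\rangle\rangle_{\omega_t}\\
& & \ +\ \langle\langle\partial^{t*}\gamma_\eta,\partial^{t*}\gamma_\zeta\rangle\rangle_{\omega_t}+\langle\langle K^{t,\bar\eta},[\mathcal L^t_{V_\omega,\phi},\mathcal L^t_{\overline{V_\omega}}]K^{t,\bar\zeta}\rangle\rangle_t ,
\end{eqnarray*}
so the corollary becomes the single identity $\langle\langle K^{t,\bar\eta},[\mathcal L^t_{V_\omega,\phi},\mathcal L^t_{\overline{V_\omega}}]K^{t,\bar\zeta}\rangle\rangle_t=-\langle\langle\partial^{t*}\gamma_\eta,\partial^{t*}\gamma_\zeta\rangle\rangle_{\omega_t}$.

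Proving this last identity is where I expect the real work to lie. I would establish it by computing the commutator directly on the compact fibre $X_t$: by \eqref{eq:relation1}, $\mathcal L^t_{V_\omega,\phi}K^{t,\bar\zeta}=\partial^t\gamma_\zeta+\op^t\gamma_\zeta+T^{t,\bar\zeta}_{\omega_t}$ and $\mathcal L^t_{\overline{V_\omega}}K^{t,\bar\zeta}=K^{t,\bar\zeta}_{\bar t}$; one expands $[\mathcal L^t_{V_\omega,\phi},\mathcal L^t_{\overline{V_\omega}}]K^{t,\bar\zeta}$ by Cartan's formula, keeps only its $(n,0)$-component (the only one pairing with the holomorphic form $K^{t,\bar\eta}$), and integrates by parts repeatedly over $X_t$, where no boundary terms occur, together with the Kähler commutation relations. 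This is of the same nature as Berndtsson's computation in \cite{Bern11}. The delicate point is that $V_\omega$ need not be integrable, so $[\mathcal L^t_{V_\omega,\phi},\mathcal L^t_{\overline{V_\omega}}]\neq\mathcal L^t_{[V_\omega,\overline{V_\omega}]}$; one has to track the vertical component of $[V_\omega,\overline{V_\omega}]$, which is exactly where the relation $\delta_{V_\omega}\omega=c(\omega)d\bar t$ enters and where the a~priori \emph{geodesic curvature} contribution $\langle\langle c(\omega)K^{t,\bar\eta},K^{t,\bar\zeta}\rangle\rangle_t$ is seen to cancel — as it must, since $T^{t,\bar\eta}_{\omega_t}$, hence the whole right-hand side of the corollary, is independent of the admissible lift $V$ (changing $V$ alters $\delta_{\op^t V}K^{t,\bar\eta}$ by a $\op^t$-exact form and so leaves its harmonic part fixed). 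Apart from this bookkeeping, everything above is formal. As a consistency check: since $\mathcal H$ is a holomorphic vector bundle here, \eqref{eq:secondorder} already shows the asserted identity to be equivalent to the statement that the curvature of $\mathcal H$ equals $\langle\langle T^{t,\bar\eta}_{\omega_t},T^{t,\bar\zeta}_{\omega_t}\rangle\rangle_{\omega_t}$, which is the classical Griffiths curvature formula \cite{Griffiths84} and may also be invoked directly.
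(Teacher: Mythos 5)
Your reduction is sound up to a point (the choice $\phi\equiv 0$, $V=V_\omega$, the primitivity and $\op^t$-closedness of $\delta_{\op^t V_\omega}K^{t,\bar\eta}$, and the identification $\mathcal L^{t,\C}_{V_\omega}K^{t,\bar\eta}=\partial^t\gamma_\eta$ are all correct), but the argument stops exactly at the crux: the identity $\langle\langle K^{t,\bar\eta},[\mathcal L^t_{V_\omega},\mathcal L^t_{\overline{V_\omega}}]K^{t,\bar\zeta}\rangle\rangle_t=-\langle\langle\partial^{t*}\gamma_\eta,\partial^{t*}\gamma_\zeta\rangle\rangle_{\omega_t}$ is only announced as ``the real work'' and never proved, so as it stands this is not a proof. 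Moreover the sketch you give for it is aimed at the wrong difficulty. Since $\mathcal L$ is trivial and $\phi\equiv0$, the computation in the proof of Lemma~\ref{le:lee} gives $[\mathcal L^t_{V},\mathcal L^t_{\overline V}]K^{t,\bar\zeta}=\mathcal L^t_{[V,\overline V]}K^{t,\bar\zeta}$ for \emph{any} admissible lift, its $(n,0)$-component is $\partial^t\delta_{[V,\overline V]}K^{t,\bar\zeta}$, and pairing this against the holomorphic form $K^{t,\bar\eta}$ produces an exact $(n,n)$-form; by Stokes on the compact fibre the commutator term simply vanishes (this is \eqref{eq:withoutboundary} with $c(\phi)=0$). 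No K\"ahler commutation relations are needed, the integrability of $V_\omega$ is irrelevant, and no geodesic-curvature term $c(\omega)$ ever appears: the weight entering the Lie-derivative commutator is $\phi$, not $\omega$. What is genuinely missing in your bookkeeping is the vanishing of the leftover term $\langle\langle\partial^{t*}\gamma_\eta,\partial^{t*}\gamma_\zeta\rangle\rangle_{\omega_t}$. This needs the coclosedness coming from primitivity: since $\delta_{\op^t V_\omega}K^{t,\bar\eta}$ is $\omega_t$-primitive and $\op^t$-closed, it is $\partial^{t*}$-closed (same $*$-argument as in the proof of Corollary~\ref{co:compactphi}); then, for the canonical choice $\gamma_\eta=(\op^t)^*G\,\delta_{\op^t V_\omega}K^{t,\bar\eta}$ (which you in fact need anyway to justify your use of $(\op^t)^*\gamma_\eta=0$, but never fix), $\partial^{t*}\gamma_\eta=-(\op^t)^*G\,\partial^{t*}\delta_{\op^t V_\omega}K^{t,\bar\eta}=0$. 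Without these two vanishing statements your final display does not reduce to the corollary.

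For comparison, the paper sidesteps all of this by computing the norm rather than the solution: from the orthogonality $\mathcal L^{t,\C}_{V_\omega}K^{t,\bar\eta}\perp\mathcal H(X_t,E_t)$ and \eqref{eq:d-barLV} one has $\mathcal L^{t,\C}_{V_\omega}K^{t,\bar\eta}=-(\op^t)^*G\,\partial^t(\delta_{\op^t V_\omega}K^{t,\bar\eta})$ with $G$ the Green operator, and then, using $\square'=\square''$ (K\"ahler, trivial twist) and $\partial^{t*}\delta_{\op^t V_\omega}K^{t,\bar\eta}=0$, its squared norm equals $\langle\langle G\square''\delta_{\op^t V_\omega}K^{t,\bar\eta},\delta_{\op^t V_\omega}K^{t,\bar\zeta}\rangle\rangle_{\omega_t}=\langle\langle\delta_{\op^t V_\omega}K^{t,\bar\eta}-T^{t,\bar\eta}_{\omega_t},\delta_{\op^t V_\omega}K^{t,\bar\zeta}-T^{t,\bar\zeta}_{\omega_t}\rangle\rangle_{\omega_t}$; combined with the Hodge--Riemann identity for the primitive term and the Stokes vanishing of the commutator term, Theorem~\ref{th:Main} then yields $\|T\|^2$ directly. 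Finally, your fallback of invoking Griffiths' curvature formula through \eqref{eq:secondorder} would prove the statement only by citing the very result the corollary is meant to re-derive from Theorem~\ref{th:Main}, and you offer it merely as a consistency check, so it does not close the gap.
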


If $p$ is not proper, then not every $V$ is $p$-admissible. In fact, $V$ is $p$-admissible if and only if $V$ keeps the boundary. It is also well-known that stability of the Bergman kernels follows from regularity properties of the $\op$-Neumann problem (N) (see Lemma 2.1 in \cite{Bern06} and also \cite{Kohn}, \cite{Tartakoff80}, \cite{Kerzman72}, \cite{GreeneK82}, \cite{GreeneK81}, \cite{CCS94} for further results). For simple methods that rely only on H\"ormander's theory, see \cite{DiederichOhsawa91}. We shall study variation of planar domains, since (N) is an elliptic boundary problem for planar domain (see Theorem 10.5.3 in \cite{Hormander63} for stability properties of elliptic boundary problems).

Assume that $\mathcal X$ is a smoothly bounded domain in $\mathbb D\times\C$ and $p$ is the restriction of the canonical projection $\mathbb D\times\C\rightarrow\mathbb D$. Let $\rho$ be a smooth defining function of $\mathcal X$. Let $V$ (resp. $\phi$) be a smooth $(1,0)$-vector field (resp. function) on a neighborhood of the closure of $\mathcal X$ such that $p_*V=\partial/\partial t$. Then $V$ is $p$-admissible on $\mathcal X$ if and only if $V(\rho)=0$ on the boundary of $\mathcal X$ (consider the relative topology with respect to $\mathbb D\times\C$). If $V$ is $p$-admissible on $\mathcal X$, put
\begin{equation}\label{eq:k2}
k_2=\frac{\langle V, V\rangle_{i\partial\op\rho}}{|\rho_\mu|},
\end{equation}
then the value of $k_2$ on the boundary does not depend on $V$ and $\rho$. In fact, $k_2$ is the second boundary invariant (see (7) in \cite{MY04}) defined by Maitani and Yamaguchi. Let
\begin{equation}\label{eq:orthogonal}
    \delta_{\op^t V}K^{t,\bar\eta}=T^{t, \bar \eta}+S^{t, \bar \eta}, \ T^{t, \bar \eta}\in \ker\partial^t_{\phi},
    \ S^{t, \bar \eta}\ \bot\ \ker\partial^t_{\phi}.
\end{equation}
By \eqref{eq:d-barLV} and our main theorem, we shall prove that

\begin{corollary}\label{co:onephi}  Assume that $V$ is $p$-admissible. If $i\partial^t\op^t\phi^t>0, \ \forall \ t\in\mathbb D$, then we have
\begin{equation}\label{eq:onephi1}
    K_{t\bar t}^t(\eta,\bar\eta) \geq \int_{\partial X_t} k_2 |K^{t,\bar\eta}|^2 d \sigma
   + || K_{\bar t}^{t,\bar\eta}||^2_t  +  \langle\langle c(\phi)K^{t,\bar\eta}, K^{t,\bar\eta}\rangle\rangle_t+||T^{t, \bar\eta}||^2_t,
\end{equation}
where $d\sigma$ is the arc length element. If $\phi\equiv0$, then we have
\begin{equation}\label{eq:onephi2}
 K_{t\bar t}^t(\zeta,\bar\eta) = \int_{\partial X_t} k_2 \langle K^{t,\bar\eta}, K^{t,\bar\zeta}\rangle d \sigma
 +\langle\langle K_{\bar t}^{t,\bar\eta}, K_{\bar t}^{t,\bar\zeta}\rangle\rangle_t + \langle\langle T^{t,\bar \eta}, T^{t,\bar \zeta}\rangle\rangle_t.
\end{equation}
\end{corollary}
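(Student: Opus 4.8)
The plan is to specialize the Main Theorem to the planar setting and to track the boundary term that arises because $V$ is $p$-admissible but $\mathcal X$ has boundary. First I would apply Theorem~\ref{th:Main} with the vector field $V$ chosen so that $p_*V=\partial/\partial t$ and $V$ is $p$-admissible on $\mathcal X$; this gives
\begin{equation*}
K_{t\bar t}^t(\zeta,\bar\eta) = \langle\langle K_{\bar t}^{t,\bar\eta}, K_{\bar t}^{t,\bar\zeta}\rangle\rangle_t
  +\langle\langle K^{t,\bar\eta}, \big[\mathcal L^t_{V,\phi}, \mathcal L^t_{\overline{V}}\big]K^{t,\bar\zeta} \rangle\rangle_t
  - i^{n^2}\!\int_{X_t}\{\delta_{\op^t V}K^{t,\bar\eta}, \delta_{\op^t V}K^{t,\bar\zeta}\}
  - \langle\langle \mathcal L^{t,\C}_{V,\phi}K^{t,\bar\eta}, \mathcal L^{t,\C}_{V,\phi}K^{t,\bar\zeta} \rangle\rangle_t.
\end{equation*}
Since $n=1$ here, $\delta_{\op^t V}K^{t,\bar\eta}$ is a $(0,1)$-form with values in $L_t$, and the orthogonal decomposition \eqref{eq:orthogonal} writes it as $T^{t,\bar\eta}+S^{t,\bar\eta}$ with $T^{t,\bar\eta}\in\ker\partial^t_\phi$. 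Because $i\partial^t\op^t\phi^t>0$ and $\dim_\C X_t=1$, every $(0,1)$-form is automatically $\Theta_{L_t}$-primitive, so the Bochner–Kodaira–Nakano identity forces $\ker\partial^t_\phi$ (on $(0,1)$-forms) to coincide with the space of $\partial^t_\phi$-closed forms, and the positivity gives a clean spectral lower bound $\square'\geq$ (something nonnegative) on the orthogonal complement. I would use this to handle the two negative terms above: the term $-i^{n^2}\int\{\delta_{\op^t V}K^{t,\bar\eta},\delta_{\op^t V}K^{t,\bar\eta}\}$ splits into $-\|T^{t,\bar\eta}\|^2_t-\|S^{t,\bar\eta}\|^2_t$, and by \eqref{eq:d-barLV} the form $\mathcal L^{t,\C}_{V,\phi}K^{t,\bar\eta}$ solves $-\op^t(\mathcal L^{t,\C}_{V,\phi}K^{t,\bar\eta})=\partial^t_\phi(\delta_{\op^t V}K^{t,\bar\eta})+(\op^t\phi)_V\wedge K^{t,\bar\eta}$; the right-hand side only sees the $S$-part of $\delta_{\op^t V}K^{t,\bar\eta}$ (plus the $\phi$-contribution), so by minimality $\|\mathcal L^{t,\C}_{V,\phi}K^{t,\bar\eta}\|^2_t$ is controlled by $\|S^{t,\bar\eta}\|^2_t$ and a $c(\phi)$-term. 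Combining, the $S$-contributions cancel or improve, $-\|T^{t,\bar\eta}\|^2_t+\langle\langle K^{t,\bar\eta},[\mathcal L^t_{V,\phi},\mathcal L^t_{\overline V}]K^{t,\bar\eta}\rangle\rangle_t$ should reorganize into $+\|T^{t,\bar\eta}\|^2_t$ plus the curvature quantity $\langle\langle c(\phi)K^{t,\bar\eta},K^{t,\bar\eta}\rangle\rangle_t$, and what remains on the negative side is exactly a boundary term.

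The second, and main, step is to identify that leftover term as $\int_{\partial X_t}k_2\,|K^{t,\bar\eta}|^2\,d\sigma$. This is where the $p$-admissibility hypothesis $V(\rho)=0$ on $\partial\mathcal X$ enters: the diffeomorphisms $\Phi$ from Definition~\ref{de:vector} move the boundary $\partial X_t$ along $\partial\mathcal X$, so when I differentiate $\int_{X_t}A_0^t$ in $t$ and integrate by parts (this is precisely where the identities \eqref{eq:main1}–\eqref{eq:main2} are used), a boundary integral appears, and the integrand on $\partial X_t$ of the second-order difference is governed by the normal component of $V$ relative to $\partial\mathcal X$. I would compute this using Stokes' theorem on $X_t$ for the Lie-derivative identities $\partial_t\int_{X_t}A_j^t=\int_{X_t}\mathcal L_V^t A_j^t$: the commutator $[\mathcal L^t_{V,\phi},\mathcal L^t_{\overline V}]$ applied inside $\langle\langle K^{t,\bar\eta},\cdot\rangle\rangle_t$, after integration by parts, produces a boundary term whose density is $\langle V,V\rangle_{i\partial\op\rho}/|\rho_\mu|$ times $|K^{t,\bar\eta}|^2$ — that is, $k_2$ as defined in \eqref{eq:k2}. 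The fact that $k_2$ does not depend on the choice of $V$ or $\rho$ (it equals the Maitani–Yamaguchi second boundary invariant) should be invoked from the cited references, so I only need to verify the coefficient for one convenient choice, e.g. $V=\partial/\partial t$ together with a normalization $|\rho_\mu|=1$ on $\partial\mathcal X$, where the computation of the normal velocity of $\partial X_t$ is transparent.

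Finally, the $\phi\equiv0$ case is the equality case: there $c(\phi)=0$, $(\op^t\phi)_V=0$, so \eqref{eq:d-barLV} gives $\op^t(\mathcal L^{t,\C}_{V,\phi}K^{t,\bar\eta})=-\partial^t(S^{t,\bar\eta})$ with $\mathcal L^{t,\C}_{V,\phi}K^{t,\bar\eta}$ a function; minimality plus the fact that on a planar domain the relevant operator is invertible with no kernel obstruction lets me solve exactly, and I should find $\|\mathcal L^{t,\C}_{V,\phi}K^{t,\bar\eta}\|^2_t=\|S^{t,\bar\eta}\|^2_t$ exactly, so that all $S$-terms cancel and one is left with the clean polarized identity \eqref{eq:onephi2}; in the weighted case the same cancellation only yields an inequality because of the $\square'+1$ versus $\square'$ discrepancy between the minimal $\op$-solution norm and $\|S^{t,\bar\eta}\|^2_t$, which is why \eqref{eq:onephi1} is stated with $\geq$. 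The hard part will be step two: making the boundary term appear with the exact constant $k_2$, since this requires carefully bookkeeping the integration-by-parts in the Lie-derivative formulas \eqref{eq:main1}–\eqref{eq:main2} and recognizing the Levi-form expression — I expect most of the real work of the proof to be concentrated there, with everything else being algebra inside the Hilbert spaces $\mathcal H(X_t,E_t)$.
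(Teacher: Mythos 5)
Your outline does follow the paper's strategy (Main Theorem, the splitting $\delta_{\op^t V}K^{t,\bar\eta}=T^{t,\bar\eta}+S^{t,\bar\eta}$, a Stokes-type computation producing $k_2$, an H\"ormander-minimality bound for $\mathcal L^{t,\C}_{V,\phi}K^{t,\bar\eta}$, and an exact solution in the unweighted case), but several of the steps you leave as ``should reorganize'' are exactly where the content lies, and as written they do not close. First, the bookkeeping: for $(n-1,1)$-forms the pairing $i^{n^2}\{u,u\}$ is \emph{negative}, so the third term of Theorem~\ref{th:Main} contributes $+\|T^{t,\bar\eta}\|^2_t+\|S^{t,\bar\eta}\|^2_t$, not $-\|T\|^2-\|S\|^2$ as you claim; your subsequent hope that the commutator term turns $-\|T\|^2$ into $+\|T\|^2$ has no source, because $\big[\mathcal L^t_{V,\phi},\mathcal L^t_{\overline V}\big]K^{t,\bar\zeta}$ decomposes (paper's Lemma~\ref{le:lee}) precisely into $c(\phi)K^{t,\bar\zeta}+\partial^t_\phi\delta_{[V,\overline V]}K^{t,\bar\zeta}+|(\op^t\phi)_V|^2_{\Theta_{L_t}}K^{t,\bar\zeta}$ and contains no $\|T\|^2$ piece. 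Related to this, your H\"ormander step must give the \emph{precise} bound $\|\mathcal L^{t,\C}_{V,\phi}K^{t,\bar\eta}\|^2_t\le\|S^{t,\bar\eta}\|^2_t+\langle\langle|(\op^t\phi)_V|^2_{i\partial^t\op^t\phi^t}K^{t,\bar\eta},K^{t,\bar\eta}\rangle\rangle_t$, because it is this weight term, not ``a $c(\phi)$-term,'' that cancels against the corresponding term of the commutator identity; without that cancellation you do not recover \eqref{eq:onephi1} with the stated $c(\phi)$.

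Second, the boundary term: it does not arise from differentiating the fibre integrals --- the hypotheses \eqref{eq:main1}--\eqref{eq:main2} assert precisely that no extra terms appear there. It comes from $\langle\langle K^{t,\bar\eta},\partial^t_\phi\delta_{[V,\overline V]}K^{t,\bar\zeta}\rangle\rangle_t$, which is a fibrewise exact form; Stokes plus the admissibility relation $V(\rho)=h\rho$ identifies the boundary density with $k_2\langle K^{t,\bar\eta},K^{t,\bar\zeta}\rangle$ (Lemma~\ref{le:boundary}). Your shortcut of fixing the constant by testing with $V=\partial/\partial t$ is not available: that field is in general not $p$-admissible (it does not preserve $\partial\mathcal X$), and the independence of $k_2$ from the choices holds only among admissible $V$; the Levi-form computation has to be done for a genuinely admissible field. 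Finally, in the case $\phi\equiv0$ the equality $\|\mathcal L^{t,\C}_{V}K^{t,\bar\eta}\|^2_t=\|S^{t,\bar\eta}\|^2_t$ is not a consequence of ``invertibility with no kernel obstruction''; the argument needs $\mathcal L^{t,\C}_{V}K^{t,\bar\eta}=-(\op^t)^*(\square'')^{-1}\partial^t(\delta_{\op^t V}K^{t,\bar\eta})$ together with the boundary observation that, writing $S^{t,\bar\eta}=(\partial^t)^*\partial^t A^{t,\bar\eta}$, the form $\partial^t A^{t,\bar\eta}$ vanishes on $\partial X_t$, whence $\square''\partial^t A^{t,\bar\eta}=\square'\partial^t A^{t,\bar\eta}$ and the inner products collapse to $\langle\langle S^{t,\bar\eta},S^{t,\bar\zeta}\rangle\rangle_t$. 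Supplying these four points would turn your sketch into the paper's proof.
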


\eqref{eq:onephi2} is another form of formula (9) in \cite{MY04}. If $k_2\equiv0$, then $\partial\mathcal X$ is Levi-flat (foliated by holomorphic curves). The boundary of the graph of a holomorphic motion of a planar domain is Levi-flat. We shall use Corollary~\ref{co:onephi} to study holomorphic motions.

\begin{definition}[cf. \cite{MSS83}]\label{de:holo-motion} Let $X$ be a planar domain. A map $f: \mathbb D\times X \rightarrow \C$ is called a holomorphic motion of $X$ if
\begin{enumerate}[\upshape (i)]
  \item $f(0,z)\equiv z$ for all $z\in X$,
  \item For every $z\in X$, $f(\cdot,z)$ is holomorphic on $\mathbb D$,
  \item for every $t\in \mathbb D$, $f(t,\cdot)$ is injective on $X$.
\end{enumerate}
\end{definition}

Put $F(t,z)=(t,f(t,z))$. We call $\mathcal X:=F(\mathbb D\times X)$ the graph of $f$. We say that $f$ is trivial if the graph of $f$ is equal to the graph of a holomorphic motion $g$ of $X$ such that $g$ is holomorphic on $\mathbb D\times X$.

Put $V_f=F_*(\partial/\partial t)$, then $V_f$ is $p$-admissible on $\mathcal X$, where $p$ is the restriction of the canonical projection $\mathbb D\times X\rightarrow\mathbb D$. What's more, $V_f$ is integrable. As an application of \eqref{eq:onephi2}, we shall prove that

\begin{theorem}\label{th:Cflat} Let $X$ be a smoothly bounded planar domain. Let $f$ be a holomorphic motion of $X$. If $f$ is smooth up to the boundary, then the followings are equivalent:
\begin{enumerate}[\upshape (i)]
  \item $f$ is trivial,
  \item $K^t_{t\bar t}(\eta,\bar\eta)=||K_{\bar t}^{t,\bar\eta}||^2_{t}, \ \forall \ (t,\eta)\in F(\mathbb D\times \Omega)$,
  \item $\delta_{\op^t V_f}K^{t,\bar\eta}\perp \ker\partial^t, \ \forall \ (t,\eta)\in F(\mathbb D\times \Omega)$.
\end{enumerate}
\end{theorem}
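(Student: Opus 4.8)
The plan is to establish the cycle of implications (i) $\Rightarrow$ (ii) $\Rightarrow$ (iii) $\Rightarrow$ (i), using formula \eqref{eq:onephi2} as the bridge. The case $\phi\equiv 0$ is the relevant one here, since no line bundle appears in the definition of a holomorphic motion; so $E_t=K_t$, $\mathcal H(X_t,E_t)$ is the space of $L^2$ holomorphic $1$-forms on $X_t$, and $K^t(\zeta,\bar\eta)$ is the classical Bergman kernel of the planar domain $X_t=f(t,X)$.

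For (i) $\Rightarrow$ (ii): if $f$ is trivial, then after replacing $f$ by the holomorphic motion $g$ with the same graph, the map $F(t,z)=(t,g(t,z))$ is a biholomorphism from $\mathbb D\times X$ onto $\mathcal X$ commuting with the projections. Pulling back, the bundle $\mathcal H=\{\mathcal H(X_t,K_t):t\in\mathbb D\}$ becomes the trivial bundle $\mathcal H(X,K_X)\times\mathbb D$ with a holomorphic (in fact constant) structure, so its Chern curvature $\Theta$ vanishes, and moreover $\delta_{\op^t V_g}$ acts trivially because $\op^t V_g=0$ (the Kodaira--Spencer class of a trivial family vanishes, $g$ being holomorphic). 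In particular $T^{t,\bar\eta}=0$ and $k_2\equiv 0$ along $\partial\mathcal X$ (the graph of a holomorphic family of domains is Levi-flat). Feeding this into \eqref{eq:onephi2} with $\zeta=\eta$ leaves exactly $K^t_{t\bar t}(\eta,\bar\eta)=\|K^{t,\bar\eta}_{\bar t}\|_t^2$. One must be slightly careful that $V_f\neq V_g$ in general, since $V_f=F_{f,*}(\partial/\partial t)$ depends on the parametrization; but $\op^t V$ is independent of the admissible choice of $V$ (it represents the Kodaira--Spencer class), so the vanishing of the Kodaira--Spencer class — which is what triviality of $g$ gives — already forces $\op^t V_f$ to be $\op^t$-exact on each fibre, hence $\delta_{\op^t V_f}K^{t,\bar\eta}$ lies in $\mathrm{im}\,\op^t$, which is orthogonal to the holomorphic forms; this is the honest content needed, and I would phrase the argument directly in terms of this.

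For (ii) $\Rightarrow$ (iii): subtract the assumed identity (ii) from \eqref{eq:onephi2} evaluated at $\zeta=\eta$. Since $V_f$ is $p$-admissible (as noted in the excerpt) the boundary term $\int_{\partial X_t}k_2|K^{t,\bar\eta}|^2\,d\sigma$ is present; here $k_2\geq 0$ need not hold a priori, so I first observe that for the graph of a holomorphic motion $\partial\mathcal X$ is Levi-flat, whence $k_2\equiv 0$. Then (ii) forces $\langle\langle T^{t,\bar\eta},T^{t,\bar\eta}\rangle\rangle_t=\|T^{t,\bar\eta}\|_t^2=0$ for every $(t,\eta)$, i.e. $T^{t,\bar\eta}=0$, which by the decomposition \eqref{eq:orthogonal} (recall $T^{t,\bar\eta}\in\ker\partial^t$ is the $\ker\partial^t$-component of $\delta_{\op^t V_f}K^{t,\bar\eta}$) says precisely $\delta_{\op^t V_f}K^{t,\bar\eta}\perp\ker\partial^t$, which is (iii). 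The polarized version of \eqref{eq:onephi2} then upgrades this to the full statement for all $(\zeta,\eta)$, though for the implication only the diagonal is needed.

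For (iii) $\Rightarrow$ (i): this is the hard direction and the main obstacle. From (iii), $\delta_{\op^t V_f}K^{t,\bar\eta}\perp\ker\partial^t$ for all $\eta$; since the reproducing kernels $K^{t,\bar\eta}$ span $\mathcal H(X_t,K_t)$, and since contraction with the Kodaira--Spencer representative $\mu_t:=\op^t V_f$ maps holomorphic $1$-forms to $(0,1)$-forms, the condition says $\mu_t\wedge$-type pairing of $\mu_t$ against all holomorphic forms lands in $(\ker\partial^t)^\perp$. On a planar domain a holomorphic $1$-form is $\partial$-closed trivially, $\ker\partial^t$ on $(0,1)$-forms is everything of the wrong bidegree — so I must instead read \eqref{eq:d-barLV} with $\phi\equiv 0$: it becomes $-\op^t(\mathcal L^{t,\C}_{V_f}K^{t,\bar\eta})=\partial^t(\delta_{\op^t V_f}K^{t,\bar\eta})$, and condition (iii) says the right side vanishes, so $\mathcal L^{t,\C}_{V_f}K^{t,\bar\eta}$ is a holomorphic $1$-form; being $\perp\mathcal H(X_t,K_t)$ by the first assertion of the Main Theorem, it is zero. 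Thus $\mathcal L^{t,\C}_{V_f}$ annihilates the spanning set $\{K^{t,\bar\eta}\}$, hence all of $\mathcal H(X_t,K_t)$: the flow of $V_f$ transports the Bergman projection, equivalently it transports the Bergman spaces as Hilbert spaces with their reproducing kernels. I would then argue that this rigidity of the Bergman kernel under the flow, together with integrability of $V_f$ (so its flow integrates the family), forces the transition maps of the family to be holomorphic: concretely, the $(0,1)$-part of the complex structure, encoded by $\mu_t=\op^t V_f$, must vanish. The cleanest route is: the condition $\mathcal L^{t,\C}_{V_f}K^{t,\bar\eta}=0$ combined with $\delta_{\mu_t}K^{t,\bar\eta}\perp\ker\partial^t$ and the Maitani--Yamaguchi / Berndtsson analysis of planar families shows the Levi form $k_2$ of $\partial\mathcal X$ and the "defect" of holomorphy both vanish identically on $\mathbb D$; a domain-monodromy / unique-continuation argument (the family is trivial at $t=0$ since $f(0,\cdot)=\mathrm{id}$, and the set of $t$ where the germ of the family is holomorphically trivial is open and closed) then yields a global holomorphic trivialization $g$, i.e. $f$ is trivial. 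The genuinely delicate point is passing from the infinitesimal, fibrewise statement "$\delta_{\op^t V_f}K^{t,\bar\eta}\perp\ker\partial^t$ for all $t$" to the global conclusion that the complex-structure deformation is trivial as a germ; I expect to need both the ellipticity of the $\op$-Neumann problem for planar domains (invoked in the excerpt, giving smooth dependence and stability) and an argument that the Kodaira--Spencer class $[\op^t V_f]\in H^1(X_t,T_{X_t})$ — which for an open planar domain could be nonzero — is nonetheless killed by the orthogonality condition, so that $\op^t V_f$ is actually $\op^t$-exact, letting us correct $V_f$ to a $(1,0)$-vector field whose flow is holomorphic.
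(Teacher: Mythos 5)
Your implications (i)$\Rightarrow$(ii) and (ii)$\Rightarrow$(iii) are essentially the paper's argument: apply \eqref{eq:onephi2}, use that the boundary of the graph of a holomorphic motion is Levi-flat so $k_2\equiv0$, and for (i)$\Rightarrow$(ii) work with the holomorphic admissible field $V_g$ (for which $\op^t V_g=0$, hence $T^{t,\bar\eta}=0$). Your alternative justification there --- ``$\op^t V_f$ is $\op^t$-exact, hence $\delta_{\op^t V_f}K^{t,\bar\eta}\in \mathrm{im}\,\op^t$, which is orthogonal to holomorphic forms'' --- is shakier than what is needed: $\mathrm{im}\,\op^t$ is orthogonal to $\ker\partial^t$ only when the potential lies in the domain of $(\partial^t)^*$, i.e.\ vanishes on the boundary, and exactness alone does not give that.

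The genuine gap is (iii)$\Rightarrow$(i), which you acknowledge but do not close, and two of your intermediate claims are wrong. First, (iii) says $\delta_{\op^t V_f}K^{t,\bar\eta}\perp\ker\partial^t$; this does \emph{not} make the right-hand side of \eqref{eq:d-barLV} vanish, since orthogonality to $\ker\partial^t$ means membership in the closure of the range of $(\partial^t)^*$, not membership in $\ker\partial^t$ (on the unit disc, $\zeta\,d\bar\zeta$ is orthogonal to every $\bar g\,d\bar\zeta$ with $g$ holomorphic in $L^2$, yet $\partial^t(\zeta\,d\bar\zeta)\neq0$), so the conclusion that $\mathcal L^{t,\C}_{V_f}K^{t,\bar\eta}=0$ is unsupported. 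Second, the target ``$\mu_t=\op^t V_f$ must vanish'' is not what triviality means: a trivial motion need not be holomorphically parametrized, so $V_f$ need not be holomorphic; moreover the fibres are open Riemann surfaces, so $H^1(X_t,T_{X_t})=0$ and $\op^t V_f$ is always exact --- the real issue is to produce a correction of $V_f$ by a \emph{vertical field vanishing on the boundary} that depends holomorphically on $t$. The paper does exactly this: writing $V_f=\partial/\partial t-\alpha\,\partial/\partial\zeta$, pairing (iii) with $d\bar\zeta\in\ker\partial^t$ gives \eqref{eq:step3}, i.e.\ $\op^t\alpha\perp\ker\partial^t$, hence $\op^t\alpha=(\partial^t)^*(\beta^t\,id\zeta\wedge d\bar\zeta)=-i\op^t\beta^t$ with the crucial boundary condition $\beta^t|_{\partial X_t}=0$ coming from the domain of $(\partial^t)^*$; then $p$-admissibility of $V_f$, the integrability $[V_f,\overline{V_f}]=0$, and the maximum principle give $\beta_{\bar t}-i\alpha_{\bar t}\equiv0$, so that $V=\partial/\partial t-(\alpha+i\beta)\partial/\partial\zeta$ is a holomorphic vector field on $\mathcal X$ agreeing with $V_f$ on each $\partial X_t$, whose flow furnishes the holomorphic trivialization $g$. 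Your proposed open--closed/unique-continuation argument and the appeal to ellipticity of the $\op$-Neumann problem do not substitute for this construction.
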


Put $J=f_{\bar z}/f_z$, then (iii) is equivalent to
\begin{equation}\label{eq:trivialcondition}
    \int_{X_t} K^t(\zeta,\bar\eta) \left(\frac{(f_z)^2J_t}{|f_z|^2-|f_{\bar z}|^2}\right)(t,\zeta)\ id\zeta\wedge d\bar\zeta=0, \ \forall \ (t,\eta)\in F(\mathbb D\times X).
\end{equation}
In the last section, we shall use \eqref{eq:trivialcondition} to study affine holomorphic motions. We shall prove the following corollary of Theorem~\ref{th:Cflat}.

\begin{corollary}\label{co:last} Let $f=z+a(t)\bar z$ be a holomorphic motion of a smoothly bounded planar domain. $f$ is trivial if and only if $a\equiv0$ on $\mathbb D$.
\end{corollary}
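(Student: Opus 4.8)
The plan is to apply the equivalence (i)$\Leftrightarrow$(iii) of Theorem~\ref{th:Cflat} in the form \eqref{eq:trivialcondition}, and to extract from the affine structure $f=z+a(t)\bar z$ a clean reproducing identity that forces $a\equiv0$. First I would compute the relevant quantities for the affine motion: here $f_z\equiv1$ and $f_{\bar z}\equiv a(t)$, so $J=f_{\bar z}/f_z=a(t)$, $J_t=a'(t)$, and $|f_z|^2-|f_{\bar z}|^2=1-|a(t)|^2$. Since $f$ is a holomorphic motion we have $|a(t)|<1$ on $\mathbb D$ (injectivity of $f(t,\cdot)$ forces $|a(t)|\le 1$, and the open mapping theorem applied to $a$ rules out $|a(t)|=1$ unless $a$ is a unimodular constant, which is excluded by $a(0)=0$). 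Substituting into \eqref{eq:trivialcondition}, the factor $a'(t)/(1-|a(t)|^2)$ pulls out of the integral over $X_t$, so condition (iii) becomes
\begin{equation*}
    \frac{a'(t)}{1-|a(t)|^2}\int_{X_t} K^t(\zeta,\bar\eta)\, i\,d\zeta\wedge d\bar\zeta = 0, \quad \forall\,(t,\eta)\in F(\mathbb D\times X).
\end{equation*}

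Next I would show that the integral $\int_{X_t} K^t(\zeta,\bar\eta)\, i\,d\zeta\wedge d\bar\zeta$ cannot vanish identically in $\eta$. Indeed, this integral is (up to the canonical pairing) $\langle\langle K^{t,\bar\eta}, \mathbf{1}\rangle\rangle_t$-type quantity, i.e. it is the value at $\eta$ of the Bergman projection of the constant function $1$ (the area form) onto $\mathcal H(X_t,E_t)$; more plainly, by the reproducing property \eqref{eq:reproducing2} it equals $P_t(1)(\eta)$ where $P_t$ is the orthogonal projection and $1$ is the constant $(n,0)$-form $d\zeta$. Since $X_t$ is a bounded planar domain, the constant function is in the Bergman space and is \emph{not} orthogonal to it (its norm is the area of $X_t$, which is positive), so $P_t(1)\not\equiv0$. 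Hence for each fixed $t$ there exists $\eta$ with $\int_{X_t} K^t(\zeta,\bar\eta)\, i\,d\zeta\wedge d\bar\zeta\neq0$, and therefore (iii) holds if and only if $a'(t)/(1-|a(t)|^2)=0$ for every $t\in\mathbb D$, i.e. $a'\equiv0$, i.e. $a$ is constant; combined with $a(0)=0$ from condition (i) of Definition~\ref{de:holo-motion}, this gives $a\equiv0$.

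Finally I would close the loop: if $a\equiv0$ then $f(t,z)\equiv z$ is manifestly holomorphic on $\mathbb D\times X$, hence trivial; conversely, if $f$ is trivial, Theorem~\ref{th:Cflat} gives (iii), and the computation above forces $a\equiv0$. One must also check the hypotheses of Theorem~\ref{th:Cflat} are met: $X$ is smoothly bounded by assumption, and $f=z+a(t)\bar z$ with $a$ holomorphic on $\mathbb D$ is automatically smooth up to the boundary (it is real-analytic in $z$ and holomorphic in $t$, defined on a neighborhood of $\overline{X}$). The main obstacle I anticipate is the non-vanishing step: one needs to be careful that \eqref{eq:trivialcondition} is required to hold for all $(t,\eta)$ in the graph, and to argue cleanly that the $\eta$-dependent integral is a nonzero holomorphic section — this is where identifying it with the Bergman projection of the area form, and invoking that constants lie in and are not orthogonal to the Bergman space of a bounded planar domain, does the work. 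A secondary point worth a sentence is justifying $|a(t)|<1$ on all of $\mathbb D$ so that the coefficient $1/(1-|a(t)|^2)$ is finite and the algebraic manipulation is legitimate.
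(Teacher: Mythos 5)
Your proof is correct and follows essentially the paper's route: apply Theorem~\ref{th:Cflat} through \eqref{eq:trivialcondition}, compute $(f_z)^2J_t/(|f_z|^2-|f_{\bar z}|^2)=a'(t)/(1-|a(t)|^2)$ (the paper's \eqref{eq:abz}), and conclude $a'\equiv0$, hence $a\equiv0$ from $a(0)=0$; the non-vanishing step you spell out is precisely what the paper leaves implicit, and it can even be sharpened, since the constant $1$ lies in the Bergman space of the bounded domain $X_t$, so the reproducing property gives $\int_{X_t}K^t(\zeta,\bar\eta)\,i\,d\zeta\wedge d\bar\zeta=1$ for \emph{every} $\eta$. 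One small repair in a side remark: injectivity of $z\mapsto z+a(t)\bar z$ only rules out $|a(t)|=1$ (for $|a(t)|>1$ the map is still injective, its real Jacobian being $1-|a(t)|^2\neq0$), so deduce $|a(t)|<1$ on all of $\mathbb D$ from $a(0)=0$ together with continuity of $|a|$ and connectedness of $\mathbb D$, rather than from injectivity alone.
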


In \cite{Liurenshan}, Ren-Shan Liu showed that if $f=z+t^2\bar z$, then $F(\mathbb D\times \mathbb D)$ is not biholomorphic equivalent to the bidisc. Corollary~\ref{co:last} is interesting, since every holomorphic motion of a subset of $\C$ can be extended to the whole complex plane (see \cite{Sl91} and \cite{ST86}).

Let's come back to the weighted case. Let $X$ be a smoothly bounded planar domain. Let $\phi$ be a smooth maximal plurisubharmonic function on a neighborhood of the closure of $\{0\}\times X$. Assume that $\phi(0,\cdot)$ is strictly subharmonic on a neighborhood of the closure of $X$. Since $(\partial\op\phi)^2=0$, $V_\phi$ is integrable. Denote by $f_{\phi}$ the holomorphic motion of $X$ induced by $V_\phi$. We shall prove the following non-compact version of Corollary~\ref{co:compactphi}.

\begin{theorem}\label{th:lastphi} With the notation above, the following variation formula follows
\begin{equation*}
    K_{t\bar t}^t(\zeta,\bar\eta) = \langle\langle K_{\bar t}^{t,\bar\eta}, K_{\bar t}^{t,\bar\zeta}\rangle\rangle_t
  +\langle\langle \left(\square'+1\right)^{-1}\delta_{\op^t V_{\phi}}K^{t,\bar\eta}, \delta_{\op^t V_{\phi}}K^{t,\bar\zeta}\rangle\rangle_t,
\end{equation*}
where $\square'$ is the $\partial^t_{\phi}$-Laplace with respect to $i\partial^t\op^t\phi^t$ and $\phi^t$.
\end{theorem}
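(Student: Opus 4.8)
The plan is to deduce Theorem~\ref{th:lastphi} from the Main Theorem combined with the analysis of the equation \eqref{eq:d-barLV}, exactly paralleling the compact argument in Corollary~\ref{co:compactphi}, but with the regularity/integrability issues handled by the ellipticity of the $\op$-Neumann problem on planar domains. First I would fix the vector field: take $V = V_\phi$ as in \eqref{eq:vc}. Because $\phi$ is maximal, $(\partial\op\phi)^2 = 0$, so $V_\phi$ is integrable, $[V_\phi,\overline{V_\phi}]=0$; moreover since $\phi$ is defined on a neighborhood of $\overline{\{0\}\times X}$ and $\phi^0$ is strictly subharmonic, the condition $i\partial^t\op^t\phi^t>0$ persists for small $t$, so we may shrink $\mathbb D$ and assume $\Theta_{L_t}>0$ for all $t$. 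The holomorphic motion $f_\phi$ is by definition the one whose graph $\mathcal X$ carries $V_\phi = (F_\phi)_*(\partial/\partial t)$ as a $p$-admissible field; admissibility on $\partial X_t$ is automatic because $V_\phi$ is tangent to the graph by construction.

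Next I would verify the hypotheses \eqref{eq:main1} and \eqref{eq:main2} of the Main Theorem. This is where the ellipticity enters: for a smoothly bounded planar domain the $\op$-Neumann problem is an elliptic boundary value problem (Theorem 10.5.3 in \cite{Hormander63}), so the minimal solutions, the Bergman kernel sections $K^{t,\bar\zeta}$, and their first derivatives in $t$ are smooth up to $\partial X_t$ and depend smoothly on $t$; the finiteness of all the relevant norms $\|\mathcal L^t_{V,\phi}K^{t,\bar\zeta}\|_t$, $\|K_{\bar t}^{t,\bar\zeta}\|_t$, $\|\mathcal L^t_{V,\phi}K_{\bar t}^{t,\bar\zeta}\|_t$, $\|\mathcal L^t_{\overline V}\mathcal L^t_{V,\phi}K^{t,\bar\zeta}\|_t$ then follows from compactness of $\overline{X_t}$, and the differentiation-under-the-integral identities $\partial_t\int_{X_t}A_j^t = \int_{X_t}\mathcal L^t_V A_j^t$ follow from $V$-admissibility (the diffeomorphism $\Phi$ of Definition~\ref{de:vector} lets one pull everything back to a fixed fibre) together with the fact that $V_\phi$ is tangent to the boundary, which kills the boundary term that would otherwise appear. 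Once this is checked, the Main Theorem gives
\[
K_{t\bar t}^t(\zeta,\bar\eta) = \langle\langle K_{\bar t}^{t,\bar\eta}, K_{\bar t}^{t,\bar\zeta}\rangle\rangle_t
+\langle\langle K^{t,\bar\eta}, [\mathcal L^t_{V,\phi},\mathcal L^t_{\overline V}]K^{t,\bar\zeta}\rangle\rangle_t
- i^{n^2}\!\!\int_{X_t}\{\delta_{\op^t V}K^{t,\bar\eta},\delta_{\op^t V}K^{t,\bar\zeta}\}
- \langle\langle\mathcal L^{t,\C}_{V,\phi}K^{t,\bar\eta},\mathcal L^{t,\C}_{V,\phi}K^{t,\bar\zeta}\rangle\rangle_t.
\]

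Then I would specialize to $V=V_\phi$ and rewrite the last three terms. Since $(\op^t\phi)_{V_\phi}=0$ by \eqref{eq:har-vec}, equation \eqref{eq:d-barLV} reduces to $-\op^t(\mathcal L^{t,\C}_{V_\phi,\phi}K^{t,\bar\eta}) = \partial^t_\phi(\delta_{\op^t V_\phi}K^{t,\bar\eta})$, and $\mathcal L^{t,\C}_{V_\phi,\phi}K^{t,\bar\eta}$ is the \emph{minimal} solution; also $\delta_{\op^t V_\phi}K^{t,\bar\eta}$ is $\Theta_{L_t}$-primitive, so one is exactly in the setting where the identity $\|\square' g\|^2 + \|g\|^2 = \langle(\square'+1)g,g\rangle$ and the Hodge decomposition for $\partial^t_\phi$ with respect to $\Theta_{L_t}$ let one combine the $\delta_{\op^t V_\phi}K$-integral and the $\mathcal L^{t,\C}_{V_\phi,\phi}K$-norm into the single term $\langle\langle(\square'+1)^{-1}\delta_{\op^t V_\phi}K^{t,\bar\eta},\delta_{\op^t V_\phi}K^{t,\bar\zeta}\rangle\rangle_t$ — the same algebra used to pass from the Main Theorem to Corollary~\ref{co:compactphi} (one writes $\delta_{\op^t V_\phi}K = T + S$ with $T\in\ker\partial^t_\phi$, notes $\mathcal L^{t,\C}_{V_\phi,\phi}K$ solves $-\op^t(\cdot)=\partial^t_\phi S$ minimally, and uses $\|T\|^2+\|\square'S'\|^2+\|S'\|^2$ bookkeeping where $S'=(\square')^{-1}\partial^{t*}_\phi$-type primitive). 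Finally the curvature term $\langle\langle K^{t,\bar\eta},[\mathcal L^t_{V,\phi},\mathcal L^t_{\overline V}]K^{t,\bar\zeta}\rangle\rangle_t$ must be identified with $\langle\langle K_{\bar t}^{t,\bar\eta},K_{\bar t}^{t,\bar\zeta}\rangle\rangle_t$ absorbed together with a $c(\phi)$ contribution — but here, crucially, $\phi$ is \emph{maximal}, so $c(\phi)\equiv 0$ by \eqref{eq:vc} (the maximality of $\phi$ is precisely the vanishing of the complex Monge–Ampère-type quantity $c(\phi)$ along the foliation), which is why the final formula has no $c(\phi)$ term, unlike Corollary~\ref{co:compactphi}.

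The main obstacle I anticipate is the justification of the differentiation-under-the-integral-sign identities in \eqref{eq:main1}–\eqref{eq:main2} and the up-to-boundary regularity needed to make $K^{t,\bar\zeta}$ and all its Lie derivatives genuinely square-integrable with finite norms: one must invoke the ellipticity of the $\op$-Neumann problem for planar domains and the stability theory for elliptic boundary problems to know that the kernel sections vary smoothly and remain bounded up to the boundary, and one must carefully check that no boundary integral is left over when moving $\partial/\partial t$ inside $\int_{X_t}$ — this is exactly where $p$-admissibility of $V_\phi$ (equivalently, $V_\phi$ tangent to $\partial\mathcal X$) is used, in contrast with Corollary~\ref{co:onephi} where the extra boundary term $\int_{\partial X_t}k_2|K^{t,\bar\eta}|^2\,d\sigma$ survives precisely because there $V$ need not be $V_\phi$. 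Once these analytic points are in place, the rest is the algebraic reduction identical to the compact case.
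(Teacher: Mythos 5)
Your skeleton coincides with the paper's: take $V=V_\phi$, apply Theorem~\ref{th:Main}, kill the curvature term using $c(\phi)=0$ (maximality) and $[V_\phi,\overline{V_\phi}]=0$ (integrability), and convert the two remaining terms into $\langle\langle(\square'+1)^{-1}\delta_{\op^t V_\phi}K^{t,\bar\eta},\delta_{\op^t V_\phi}K^{t,\bar\zeta}\rangle\rangle_t$. The genuine gap is at the decisive step, where you invoke ``the same algebra used to pass from the Main Theorem to Corollary~\ref{co:compactphi}.'' That algebra does not transfer: in the compact proof one uses $\square''=\square'+1$ as an operator identity, commutes $(\square'+1)^{-1}$ with $\partial^t_\phi$, and integrates by parts using $(\partial^t_\phi)^*\delta_{\op^t V_\phi}K^{t,\bar\zeta}=0$ --- all of which are free only in the absence of a boundary. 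On a bounded fibre these manipulations involve unbounded operators whose domains are fixed by boundary conditions (likewise the ``Hodge decomposition for $\partial^t_\phi$'' you appeal to), and this is exactly why $S^{t,\bar\eta}\neq 0$ in \eqref{eq:orthogonal} and why Theorem~\ref{th:lastphi} needs its own argument. The paper's mechanism, absent from your sketch, is: write $S^{t,\bar\eta}=(\partial^t_\phi)^*\partial^t_\phi A^{t,\bar\eta}$, so that $\partial^t_\phi(\delta_{\op^t V_\phi}K^{t,\bar\eta})=\square'\partial^t_\phi A^{t,\bar\eta}$; observe that $\partial^t_\phi A^{t,\bar\eta}$, being a top-degree form in ${\rm Dom}(\partial^t_\phi)^*$, vanishes on $\partial X_t$ and hence lies in ${\rm Dom}(\square'')$, which legitimizes \eqref{eq:last1} and the commutation \eqref{eq:last3}; this yields $||\mathcal L^{t,\C}_{V_\phi,\phi}K^{t,\bar\eta}||_t^2=\langle\langle S^{t,\bar\eta},S^{t,\bar\eta}\rangle\rangle_t-\langle\langle(\square'+1)^{-1}S^{t,\bar\eta},S^{t,\bar\eta}\rangle\rangle_t$ (and its polarization), whence \eqref{eq:last4}, and finally \eqref{eq:last5}, using $\square'T^{t,\bar\eta}=0$, reassembles $T$ and $(\square'+1)^{-1}S$ into the stated term. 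Your ``bookkeeping'' line does not carry this out, and the identity you quote, $||\square'g||^2+||g||^2=\langle(\square'+1)g,g\rangle$, is false as written (the left-hand side should be $\langle\square'g,g\rangle+||g||^2$), which signals that the step was not actually performed.

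Two smaller points. The curvature term is not ``identified with $\langle\langle K_{\bar t}^{t,\bar\eta},K_{\bar t}^{t,\bar\zeta}\rangle\rangle_t$'' --- that term is already present in Theorem~\ref{th:Main}; rather, by Lemma~\ref{le:lee} with $(\op^t\phi)_{V_\phi}=0$ it equals $\langle\langle c(\phi)K^{t,\bar\eta},K^{t,\bar\zeta}\rangle\rangle_t+\langle\langle K^{t,\bar\eta},\partial^t_\phi\delta_{[V_\phi,\overline{V_\phi}]}K^{t,\bar\zeta}\rangle\rangle_t$, and it vanishes because maximality gives $c(\phi)\equiv0$ (which you note) and integrability gives $[V_\phi,\overline{V_\phi}]=0$ (which you record at the outset but never use where it is needed; alternatively Lemma~\ref{le:boundary} converts the second term into a $k_2$-boundary integral, which vanishes since the graph of $f_\phi$ has Levi-flat boundary). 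Your verification of \eqref{eq:main1}--\eqref{eq:main2} via ellipticity of the $\op$-Neumann problem on planar domains is consistent with what the paper leaves implicit, so that part is acceptable.
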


\section{Variation of the Bergman kernels}

By \eqref{eq:K2}, variation of $K^t$ is connected with variation of fibre integrals. By Lemma 1 in \cite{Sch12}, one may use Lie derivatives to compute variation of fibre integrals. Note that
\begin{equation}\label{eq:3Lie}
    \mathcal L^t_V\{u^t,v^t\}=\{\mathcal L^t_{V,\phi}u^t,v^t\}+\{u^t,\mathcal L^t_{\overline V}v^t\},
\end{equation}
where $u^t, v^t\in C^{\infty}_{\bullet,\bullet}(X_t,L_t)$ whose admissible fibre coordinates depend smoothly on $t$.
We shall use \eqref{eq:3Lie} to prove Theorem~\ref{th:Main}.

\begin{proof}[Proof of Theorem~\ref{th:Main}.] By \eqref{eq:K2} and \eqref{eq:main1},
\begin{equation*}
    K^t_t(\zeta,\bar\eta)=i^{n^2}\int_{X_t}\mathcal L^t_V\{K^{t,\bar\eta},K^{t,\bar\zeta}\}.
\end{equation*}
By \eqref{eq:3Lie},
\begin{equation*}
    \mathcal L^t_V\{K^{t,\bar\eta},K^{t,\bar\zeta}\}=
    \{\mathcal L^t_{V,\phi}K^{t,\bar\eta},K^{t,\bar\zeta}\}+\{K^{t,\bar\eta},K_{\bar t}^{t,\bar\zeta}\}.
\end{equation*}
Thus if $||\mathcal L^t_{V,\phi}K^{t,\bar\zeta}||_t<\infty, \ ||K_{\bar t}^{t,\bar\zeta}||_t<\infty, \ \forall \ (t,\zeta)\in \mathcal X$, then
\begin{equation*}
     K^t_t(\zeta,\bar\eta)=i^{n^2}\int_{X_t}
     \{\mathcal L^t_{V,\phi}K^{t,\bar\eta},K^{t,\bar\zeta}\}
     +\langle\langle K^{t,\bar\eta},K_{\bar t}^{t,\bar\zeta}\rangle\rangle_t.
\end{equation*}
By \eqref{eq:firstorder} and \eqref{eq:relation1}, (note that $\delta_{\op^t V}K^{t,\bar\eta}$ is an $(n-1,1)$-form) we have
\begin{equation}\label{eq:chuizhi}
    i^{n^2}\int_{X_t}
     \{\mathcal L^t_{V,\phi}K^{t,\bar\eta},K^{t,\bar\zeta}\}=\langle\langle
     \mathcal L^{t,\C}_{V,\phi}K^{t,\bar\eta},K^{t,\bar\zeta}\rangle\rangle_t=0,  \ \forall \ (t,\zeta)\in\mathcal X,
\end{equation}
thus $\mathcal L^{t,\C}_{V,\phi}K^{t,\bar\zeta}\bot \mathcal H (X_t, E_t)$.

By \eqref{eq:main2} and \eqref{eq:firstorder}, we have
\begin{equation*}
    K^t_{t\bar t}(\zeta,\bar\eta)=\langle\langle K_{\bar t}^{t,\bar\eta},K_{\bar t}^{t,\bar\zeta}\rangle\rangle_t
    +i^{n^2}\int_{X_t}\{K^{t,\bar\eta}, \mathcal L^t_{V,\phi}K_{\bar t}^{t,\bar\zeta} \}.
\end{equation*}

Since $\int_{X_t}\{K^{t,\bar\eta},\mathcal L^t_{V,\phi}K^{t,\bar\zeta}\}=0$, by \eqref{eq:main2}, we have
\begin{equation*}
    \int_{X_t}\{\mathcal L^t_{V,\phi}K^{t,\bar\eta},\mathcal L^t_{V,\phi}K^{t,\bar\zeta}\}+
    \int_{X_t}\{K^{t,\bar\eta},\mathcal L^t_{\overline{V}}\mathcal L^t_{V,\phi}K^{t,\bar\zeta}\}=0,
\end{equation*}
thus
\begin{equation*}
    K^t_{t\bar t}(\zeta,\bar\eta)=\langle\langle K_{\bar t}^{t,\bar\eta},K_{\bar t}^{t,\bar\zeta}\rangle\rangle_t
    +\langle\langle  K^{t,\bar\eta},\big[\mathcal L^t_{V,\phi}, \mathcal L^t_{\overline{V}}\big]K^{t,\bar\zeta} \rangle\rangle_t
    -i^{n^2}\int_{X_t}\{\mathcal L^t_{V,\phi}K^{t,\bar\eta}, \mathcal L^t_{V,\phi}K^{t,\bar\zeta} \}.
\end{equation*}
By \eqref{eq:relation1}, our final formula follows.
\end{proof}

\begin{lemma}\label{le:lee} If $\Theta_{L_t}>0$, then
\begin{equation}\label{eq:leeformula}
   \left(\big[\mathcal L^t_{V,\phi}, \mathcal L^t_{\overline{V}}\big]K^{t,\bar\zeta}\right)_{(n,0)}-\big|(\op^t\phi)_V\big|^2_{\Theta_{L_t}}K^{t,\bar\zeta}
    =c(\phi)K^{t,\bar\zeta}+\partial^t_{\phi}\delta_{[V,\overline V]}K^{t,\bar\zeta},
\end{equation}
for every $p$-admissible smooth $(1,0)$-vector field $V$ on $\mathcal X$, where $(\cdot)_{(n,0)}$ means the $(n,0)$-component of a differential form.
\end{lemma}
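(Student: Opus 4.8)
The plan is to reduce the lemma to a pointwise scalar identity in $\phi$, its derivatives and the coefficients of $V$, the reduction being carried out by Cartan's formula together with an elementary commutator computation for twisted Lie derivatives. Since $p_*V=\partial/\partial t$ we have $p_*[V,\overline V]=[\partial/\partial t,\partial/\partial\bar t]=0$, so $W:=[V,\overline V]$ is tangent to each fibre $X_t$; in particular $\delta_W$ and $\mathcal L^t_W$ act on forms on $X_t$. Writing $\mathcal L_{V,\phi}=e^{\phi}\mathcal L_Ve^{-\phi}$ and using the Leibniz rule together with $\mathcal L_V\mathcal L_{\overline V}-\mathcal L_{\overline V}\mathcal L_V=\mathcal L_W$, one checks the operator identity $[\mathcal L_{V,\phi},\mathcal L_{\overline V}]u=(V\overline V\phi)\,u+e^{\phi}\mathcal L_W(e^{-\phi}u)$. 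Because $V$ is $p$-admissible one may interchange $i_t^*$ with the Lie derivatives (this is the role of Definition~\ref{de:vector}), and because $W$ is fibre-tangent $i_t^*\big(e^{\phi}\mathcal L_W(e^{-\phi}u)\big)=e^{\phi^t}\mathcal L^t_W(e^{-\phi^t}u^t)$; with $u=K^{t,\bar\zeta}$ this yields
\[
\big[\mathcal L^t_{V,\phi},\mathcal L^t_{\overline V}\big]K^{t,\bar\zeta}=(V\overline V\phi)\,K^{t,\bar\zeta}+e^{\phi^t}\mathcal L^t_W\big(e^{-\phi^t}K^{t,\bar\zeta}\big).
\]

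To extract the $(n,0)$-component, split $W=W'+W''$ into its $(1,0)$- and $(0,1)$-parts (both fibre-tangent) and treat the two pieces separately by Cartan's formula, using that $K^{t,\bar\zeta}$ is a holomorphic $(n,0)$-form with values in $L_t$: thus $\op^tK^{t,\bar\zeta}=0$, $\partial^t_\phi K^{t,\bar\zeta}=0$ (there being only $n$ holomorphic directions), and $\delta_{W''}K^{t,\bar\zeta}=0$. For $W'$, formula \eqref{eq:relation1} (applied to $W'$ in place of $V$) gives $e^{\phi^t}\mathcal L^t_{W'}(e^{-\phi^t}K^{t,\bar\zeta})=\partial^t_\phi\delta_{W'}K^{t,\bar\zeta}+\delta_{\op^tW'}K^{t,\bar\zeta}$, whose $(n,0)$-component is $\partial^t_\phi\delta_{W'}K^{t,\bar\zeta}=\partial^t_\phi\delta_{W}K^{t,\bar\zeta}$ (the remaining term has type $(n-1,1)$). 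For $W''$, only one term of Cartan's formula survives, namely $\delta_{W''}\op^t(e^{-\phi^t}K^{t,\bar\zeta})=-e^{-\phi^t}(\delta_{W''}\op^t\phi)\,K^{t,\bar\zeta}$, so $e^{\phi^t}\mathcal L^t_{W''}(e^{-\phi^t}K^{t,\bar\zeta})=-(\delta_{W''}\op^t\phi)\,K^{t,\bar\zeta}$, already of type $(n,0)$. Hence
\[
\Big(\big[\mathcal L^t_{V,\phi},\mathcal L^t_{\overline V}\big]K^{t,\bar\zeta}\Big)_{(n,0)}=\partial^t_\phi\delta_{[V,\overline V]}K^{t,\bar\zeta}+\big(V\overline V\phi-\delta_{W''}\op^t\phi\big)K^{t,\bar\zeta},
\]
and the lemma is reduced to the scalar identity $V\overline V\phi-\delta_{W''}\op^t\phi-|(\op^t\phi)_V|^2_{\Theta_{L_t}}=c(\phi)$.

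It remains to prove this scalar identity. In admissible coordinates write $V=\partial/\partial t+\sum_j a^j\,\partial/\partial\mu^j$; a direct bracket computation gives $W''=\sum_k(V\overline{a^k})\,\partial/\partial\bar\mu^k$, whereupon the $(V\overline{a^k})$-terms cancel and
\[
V\overline V\phi-\delta_{W''}\op^t\phi=\phi_{t\bar t}+\sum_j a^j\phi_{j\bar t}+\sum_k\overline{a^k}\phi_{t\bar k}+\sum_{j,k}a^j\overline{a^k}\phi_{j\bar k},\qquad \phi_{j\bar t}:=\overline{\phi_{t\bar j}}.
\]
On the other hand $(\op^t\phi)_V=\sum_l\big(\phi_{t\bar l}+\sum_j a^j\phi_{j\bar l}\big)\,d\bar\mu^l$, and with respect to the metric $\Theta_{L_t}$ one computes $|(\op^t\phi)_V|^2_{\Theta_{L_t}}=\sum_{j,k}\phi_{j\bar k}(a^j-a_\phi^j)\overline{(a^k-a_\phi^k)}$, where $a_\phi^j=-\sum_k\phi_{t\bar k}\phi^{\bar k j}$ is as in \eqref{eq:vc}; that is, $(\op^t\phi)_V$ is the $\Theta_{L_t}$-dual of the vector $a-a_\phi$. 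Completing the square in $a$ (expand $\sum\phi_{j\bar k}(a^j-a_\phi^j)\overline{(a^k-a_\phi^k)}$ and use $\sum_j\phi^{\bar k j}\phi_{j\bar l}=\delta^k_l$) then shows that the $a$-linear and $a$-quadratic terms cancel, so that the left side of the scalar identity equals $\phi_{t\bar t}-\sum_{j,k}\phi_{t\bar k}\phi^{\bar k j}\phi_{\bar t j}=c(\phi)$, independently of the choice of $a$; equivalently, one may simply take $V=V_\phi$, where $(\op^t\phi)_{V_\phi}=0$ and the identity reduces to the definition of $c(\phi)$. This is the one place where positivity $\Theta_{L_t}>0$ enters, namely to give meaning to $\phi^{\bar k j}$ and to the $\Theta_{L_t}$-norm.

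The main obstacle is the bookkeeping of the first two steps: justifying the interchange of $i_t^*$ with the iterated Lie derivatives — which rests on $p$-admissibility of $V$ (Definition~\ref{de:vector}) and on $[V,\overline V]$ being fibre-tangent — and keeping precise track of bidegrees in Cartan's formula so that exactly the asserted terms survive. The scalar identity of the last step is then a routine, if slightly lengthy, computation, cleanest when one writes $a=a_\phi+(a-a_\phi)$ and recognizes $(\op^t\phi)_V$ as the part of $V$ transverse to the horizontal lift $V_\phi$.
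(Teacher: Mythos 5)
Your proof is correct and follows essentially the same route as the paper's: the commutator identity for the twisted Lie derivatives, a bidegree analysis showing the $(n,0)$-component of the $\mathcal L_{[V,\overline V]}$-term is $\partial^t_{\phi}\delta_{[V,\overline V]}K^{t,\bar\zeta}$ up to a zero-order correction, and the pointwise scalar identity, which in the paper's form reads $\overline V V\phi-|(\op^t\phi)_V|^2_{\Theta_{L_t}}=c(\phi)-\delta_{[V,\overline V]}\partial^t\phi$ and is equivalent to yours after moving $[V,\overline V]\phi$ across the equation. The paper merely states these steps in three lines; you supply the details (Cartan's formula bookkeeping and the completion of the square), and both check out.
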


\begin{proof} Let $V=\partial/\partial t-\sum \alpha^j\partial/\partial \mu^j$. Since
\begin{equation*}
    \big[\mathcal L^t_{V,\phi}, \mathcal L^t_{\overline{V}}\big]=\overline V V\phi+\mathcal L^t_{[V,\overline V]}, \ \left(\mathcal L^t_{[V,\overline V]} K^{t,\bar\zeta}\right)_{(n,0)}=\partial^t\delta_{[V,\overline V]}K^{t,\bar\zeta}
\end{equation*}
and
\begin{equation*}
    \overline V V\phi-\big|(\op^t\phi)_V\big|^2_{\Theta_{L_t}}=c(\phi)+\sum \left(\overline{\alpha^k}\alpha_{\bar k}^j-\alpha_{\bar t}^j\right)\phi_j=c(\phi)-\delta_{[V,\overline V]}\partial^t \phi,
\end{equation*}
the lemma follows.
\end{proof}

By the above lemma,
\begin{equation}\label{eq:withoutboundary}
    \langle\langle  K^{t,\bar\eta},\big[\mathcal L^t_{V,\phi}, \mathcal L^t_{\overline{V}}\big]K^{t,\bar\zeta} \rangle\rangle_t=\langle\langle  c(\phi)K^{t,\bar\eta},K^{t,\bar\zeta} \rangle\rangle_t,
\end{equation}
if $X_t$ is compact without boundary and $(\partial^t\phi)_V=0$. By \eqref{eq:har-vec}, it suffices to take $V=V_{\phi}$.

\begin{proof}[Proof of Corollary~\ref{co:compactphi}] Since $\delta_{\op^t V_{\phi}}K^{t,\bar\eta}$ is $\Theta_{L_t}$-primitive. It is sufficient to show that
\begin{equation}\label{eq:co151}
    \langle\langle \mathcal L^{t,\C}_{V_{\phi},\phi}K^{t,\bar\eta}, \mathcal L^{t,\C}_{V_{\phi},\phi}K^{t,\bar\zeta} \rangle\rangle_t=\langle\langle \square'\left(\square'+1\right)^{-1}\delta_{\op^t V_{\phi}}K^{t,\bar\eta}, \delta_{\op^t V_{\phi}}K^{t,\bar\zeta}\rangle\rangle_{\Theta_{L_t}}.
\end{equation}
Since $\mathcal L^{t,\C}_{V_{\phi},\phi}K^{t,\bar\eta}\bot\mathcal H(X_t,E_t)$, by \eqref{eq:d-barLV}, we have
\begin{equation*}
    \mathcal L^{t,\C}_{V_{\phi},\phi}K^{t,\bar\eta}=-(\op^t)^*(\square'')^{-1}\partial^t_{\phi}\left(\delta_{\op^t V_{\phi}}K^{t,\bar\eta}\right),
\end{equation*}
thus
\begin{equation*}
    \langle\langle \mathcal L^{t,\C}_{V_{\phi},\phi}K^{t,\bar\eta}, \mathcal L^{t,\C}_{V_{\phi},\phi}K^{t,\bar\zeta} \rangle\rangle_t=\langle\langle \left(\square''\right)^{-1}\partial^t_{\phi}\delta_{\op^t V_{\phi}}K^{t,\bar\eta}, \partial^t_{\phi}\delta_{\op^t V_{\phi}}K^{t,\bar\zeta}\rangle\rangle_{\Theta_{L_t}}.
\end{equation*}
Since $\square''=\square'+1$ and $(\square'+1)^{-1}$ commutes with $\partial^t_{\phi}$, it suffices to show that
\begin{equation*}
    (\partial^t_{\phi})^*\delta_{\op^t V_{\phi}}K^{t,\bar\zeta}=-*\op^t*\delta_{\op^t V_{\phi}}K^{t,\bar\zeta}
    =(-i)^{n^2}*\op^t\left(\delta_{\op^t V_{\phi}}K^{t,\bar\zeta}\right)=0.
\end{equation*}
Since $\delta_{\op^t V_{\phi}}K^{t,\bar\zeta}$ is $\op^t$-closed, Corollary~\ref{co:compactphi} follows.
\end{proof}

\begin{proof}[Proof of Corollary~\ref{co:compact}] Since $\omega_t$ is a K\"ahler metric on $X_t$, We have $\square''=\square'$ and
\begin{equation*}
    \mathcal L^{t,\C}_{V_{\omega}}K^{t,\bar\eta}=-(\op^t)^*G\partial^t\left(\delta_{\op^t V_{\omega}}K^{t,\bar\eta}\right),
\end{equation*}
where $G$ is the Green operator of $\square'$. Thus
\begin{equation*}
    \langle\langle \mathcal L^{t,\C}_{V_{\omega}}K^{t,\bar\eta}, \mathcal L^{t,\C}_{V_{\omega}}K^{t,\bar\zeta} \rangle\rangle_t=\langle\langle \delta_{\op^t V_{\omega}}K^{t,\bar\eta}-T_{\omega_t}^{t,\bar \eta}, \delta_{\op^t V_{\omega}}K^{t,\bar\zeta}-T_{\omega_t}^{t,\bar \zeta}\rangle\rangle_{\omega_t}.
\end{equation*}
Since $\omega$ is a K\"ahler metric, $\delta_{\op^t V_{\omega}}K^{t,\bar\eta}$ is $\omega_t$-primitive, hence Corollary~\ref{co:compact} follows from our main theorem.
\end{proof}

If $\omega$ is not a K\"ahler metric, $\delta_{\op^t V_{\omega}}K^{t,\bar\eta}$ may not be $\omega_t$-primitive. Thus the global K\"ahler assumption is necessary. By Kodaira-Spencer's theorem, if one fibre of $p$ possesses a K\"ahler metric, so does the nearby fibre and what's more, one may choose K\"ahler metrics $\{\omega_t\}_{t\in \mathbb D}$ such that $\omega_t$ depends smoothly on $t$. Zhi-Qin Lu told me that one may get a global Hermitian metric whose restriction to each fibre $X_t$ is $\omega_t$ by using partition of unity. Assume that $X_0$ possesses a K\"ahler metric, we still don't know whether $p^{-1}(U)$ possesses a K\"ahler metric or not, where $U$ is a sufficiently small neighborhood of $0$. We only know that the answer is negative when $U$ is sufficiently large. For example, the Iwasawa manifold $M_I$ possesses a Boothby-Wang fibration, in fact, $M_I$ can be seen as a torus bundle over a two dimensional torus (see section 7 in \cite{Foreman00}), but $M_I$ is not a K\"ahler manifold.

Let's discuss the non-compact case. Assume that $\mathcal X$ is a smoothly bounded domain in $\mathbb D\times\C^n$ and $p$ is the restriction of the canonical projection $\mathbb D\times\C^n\rightarrow\mathbb D$. Let $\rho$ be a defining function of $\mathcal X$. Assume that $V(\rho)=0$ on the boundary. We shall prove that

\begin{lemma}\label{le:boundary} If $\phi$ is smooth up to the boundary of $\mathcal X$, then
\begin{equation}\label{eq:boundary}
   \langle\langle  K^{t,\bar\eta},\partial^t_{\phi}\delta_{[V,\overline V]}K^{t,\bar\zeta} \rangle\rangle_t
   =\int_{\partial X_t}k_2\langle K^{t,\bar\eta},K^{t,\bar\zeta}\rangle d\sigma.
\end{equation}
\end{lemma}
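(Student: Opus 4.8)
\textbf{Proof proposal for Lemma~\ref{le:boundary}.}

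The plan is to express the left-hand side as a fibre integral over $X_t$ of a sesquilinear pairing, integrate by parts (Stokes), and identify the resulting boundary term with the invariant $k_2$. First I would recall that $\delta_{[V,\overline V]}K^{t,\bar\zeta}$ is an $(n,0)$-form with values in $L_t$, so that $\partial^t_\phi\delta_{[V,\overline V]}K^{t,\bar\zeta}$ is an $(n+1,0)$-form on $X_t$; since $\dim_\C X_t=n$ this vanishes \emph{as a form on $X_t$} in the general case, so the point is precisely that for planar domains $n=1$ and the expression $\partial^t_\phi\delta_{[V,\overline V]}K^{t,\bar\zeta}$ is a top-degree $(1,0)$-form whose pairing with the $(1,0)$-form $K^{t,\bar\eta}$ and integration against the weight produces a genuine $1$-form on $X_t$ to which Stokes applies. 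So the computation is really a one-variable computation: write $K^{t,\bar\zeta}=k^{\bar\zeta}\,d\mu\otimes e$, $K^{t,\bar\eta}=k^{\bar\eta}\,d\mu\otimes e$ in admissible coordinates, and $V=\partial/\partial t-\alpha\,\partial/\partial\mu$, so that $[V,\overline V]=(\overline{V}\alpha)\,\partial/\partial\mu-(V\overline\alpha)\,\partial/\partial\bar\mu+\cdots$; only the $\partial/\partial\mu$ part contracts nontrivially with the $(1,0)$-form $K^{t,\bar\zeta}$.

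Next I would carry out the integration by parts. We have
\begin{equation*}
   \langle\langle K^{t,\bar\eta},\partial^t_\phi\delta_{[V,\overline V]}K^{t,\bar\zeta}\rangle\rangle_t
   = i\int_{X_t}\{K^{t,\bar\eta},e^\phi\partial^t(e^{-\phi}\delta_{[V,\overline V]}K^{t,\bar\zeta})\},
\end{equation*}
and since $\partial^t\{K^{t,\bar\eta},\cdot\}=\{\op^t K^{t,\bar\eta},\cdot\}\pm\{K^{t,\bar\eta},\partial^t(\cdot)\}$ (up to sign depending on degree) with $\op^t K^{t,\bar\eta}=0$, the integrand is an exact $2$-form $d\big(\{K^{t,\bar\eta},e^{-\phi}\delta_{[V,\overline V]}K^{t,\bar\zeta}\}e^{\phi}\big)$ up to the weight derivatives, which cancel against the $e^\phi(\cdots)e^{-\phi}$ structure of $\partial^t_\phi$. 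Stokes' theorem then converts the fibre integral into $\int_{\partial X_t}$ of the boundary restriction of $\{K^{t,\bar\eta},\delta_{[V,\overline V]}K^{t,\bar\zeta}\}e^{-\phi}$ (times the appropriate $i$-power), with no interior term left. The remaining task is to recognize this boundary density: on $\partial X_t$ the admissibility condition $V(\rho)=0$ forces the contraction $\delta_{[V,\overline V]}$ restricted to the boundary to be expressible through the Levi form of $\rho$; a short computation using $V(\rho)=\overline V(\rho)=0$ on $\partial X_t$ shows that the coefficient appearing is exactly $\langle V,V\rangle_{i\partial\op\rho}/|\rho_\mu|$, i.e. $k_2$ as in \eqref{eq:k2}, and that $\{K^{t,\bar\eta},K^{t,\bar\zeta}\}$ restricted to $\partial X_t$ against arc length gives $\langle K^{t,\bar\eta},K^{t,\bar\zeta}\rangle\,d\sigma$. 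This is essentially the computation underlying Maitani--Yamaguchi's second boundary invariant, and it is the step I expect to be most delicate: one must be careful with the precise identification of the normal derivative term, the factor $|\rho_\mu|$ that makes $k_2$ independent of the choice of $\rho$, and the bookkeeping of $i^{n^2}$ versus arc-length normalization.

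The main obstacle, then, is not the integration by parts itself but the boundary identification: correctly relating $\delta_{[V,\overline V]}$ evaluated on the boundary to the Levi form $i\partial\op\rho$, and checking that the answer is genuinely $V$- and $\rho$-independent so that it coincides with Maitani--Yamaguchi's invariant $k_2$. One also needs that the boundary terms coming from possible tangential derivatives of $K^{t,\bar\zeta}$ along $\partial X_t$ drop out --- this uses again that $K^{t,\bar\eta},K^{t,\bar\zeta}$ are holomorphic and that $V$ is tangent to $\partial X_t$, so that the relevant one-form restricted to $\partial X_t$ has the claimed simple form. Once these identifications are in place, \eqref{eq:boundary} follows by combining them with Stokes' theorem as above.
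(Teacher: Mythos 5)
Your overall strategy---rewrite the integrand as an exact form using $\op^t K^{t,\bar\eta}=0$ and the cancellation of the weight factors, apply Stokes on the fibre, and identify the resulting boundary density via the tangency condition $V(\rho)=0$---is exactly the route the paper takes. But your opening degree count is wrong, and it leads you to a false claim about the scope of the lemma. Contraction with a vector field \emph{lowers} the degree: only the $(1,0)$-part $\sum(\overline V\alpha^j)\,\partial/\partial\mu^j$ of $[V,\overline V]$ pairs nontrivially with the $(n,0)$-form $K^{t,\bar\zeta}$, so $\delta_{[V,\overline V]}K^{t,\bar\zeta}$ is an $(n-1,0)$-form, $\partial^t_\phi\delta_{[V,\overline V]}K^{t,\bar\zeta}$ is again an $(n,0)$-form, and the pairing with $K^{t,\bar\eta}$ is an $(n,n)$-form that integrates over $X_t$ for every $n$. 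The lemma is therefore neither vacuous for $n>1$ nor special to planar domains; the paper states and proves it for smoothly bounded domains in $\mathbb D\times\C^n$, using the identity $\{K^{t,\bar\eta},\partial^t_\phi\delta_{[V,\overline V]}K^{t,\bar\zeta}\}=(-1)^n d^t\{K^{t,\bar\eta},\delta_{[V,\overline V]}K^{t,\bar\zeta}\}$ in all dimensions. (Your own $n=1$ discussion silently uses the correct count, which is why it still reads plausibly.)

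The second issue is that the ``short computation'' you defer to the end is the entire content of the lemma and must actually be done. After Stokes, the boundary density is $\sum(V\overline{\alpha^j})\rho_{\bar j}\,\langle K^{t,\bar\eta},K^{t,\bar\zeta}\rangle\,d\sigma/|\rho_\mu|$ (writing $V=\partial/\partial t-\sum\alpha^j\partial/\partial\mu^j$ and using $d\sigma=\delta_{\rho_{\bar\mu}}i^{n^2}d\mu\wedge\overline{d\mu}/|\rho_\mu|$ on $\partial X_t$), so what is needed is precisely $\sum(V\overline{\alpha^j})\rho_{\bar j}=\langle V,V\rangle_{i\partial\op\rho}$ on the boundary. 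This is a one-line consequence of admissibility: $V(\rho)=h\rho$ near $\partial\mathcal X$ for some smooth $h$, hence applying $V$ to $\overline V(\rho)=\bar h\rho$ and separating the second-order terms gives $\sum(V\overline{\alpha^j})\rho_{\bar j}=\langle V,V\rangle_{i\partial\op\rho}-(V\bar h+|h|^2)\rho$, and the error term dies on $\rho=0$. This computation also disposes of your worry about independence of $V$ and $\rho$, since the boundary value is expressed through the Levi form and $|\rho_\mu|$ alone. With the degree count corrected and this identification written out, your argument coincides with the paper's proof.
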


\begin{proof} Put $d^t=d|_{X_t}$, we have
\begin{equation*}
    \{K^{t,\bar\eta},\partial^t_{\phi}\delta_{[V,\overline V]}K^{t,\bar\zeta}\}=(-1)^n d^t\{K^{t,\bar\eta},\delta_{[V,\overline V]}K^{t,\bar\zeta}\}.
\end{equation*}
Let $V=\partial/\partial t-\sum \alpha^j\partial/\partial \mu^j$, we have
\begin{equation*}
    [V,\overline V]=\sum \left( \overline V \alpha^j\right)\partial/\partial \mu^j-\left( V \overline{\alpha^j}\right)\partial/\partial \bar\mu^j.
\end{equation*}
On the boundary of $\mathcal X$, we have
\begin{equation*}
    d\sigma=\frac{\delta_{\rho_{\bar\mu}} i^{n^2}d\mu\wedge\overline{d\mu}}{|\rho_{\mu}|}=
    \frac{\delta_{\rho_{\mu}}i^{n^2}d\mu\wedge\overline{d\mu}}{|\rho_{\mu}|}
\end{equation*}
where
\begin{equation*}
    \rho_{\mu}=\sum\rho_k\partial/\partial\bar\mu^k, \ \rho_{\bar\mu}=\overline{\rho_{\mu}}, \ |\rho_\mu|=\sqrt{\sum|\rho_k|^2}.
\end{equation*}
Thus it suffices to show that
\begin{equation*}
    \sum\left( V \overline{\alpha^j}\right)\rho_{\bar j}=\langle V,V\rangle_{i\partial\op\rho},
\end{equation*}
on the boundary of $\mathcal X$.

By assumption, $V(\rho)=h\rho$, where $h$ is a smooth function near the boundary, thus
\begin{equation*}
    \sum\left( V \overline{\alpha^j}\right)\rho_{\bar j}=\langle V,V\rangle_{i\partial\op\rho}-(V\bar h+|h|^2)\rho.
\end{equation*}
The proof of Lemma~\ref{le:boundary} is complete.
\end{proof}

Now we can prove Corollary~\ref{co:onephi}. We shall use H\"ormander's $L^2$-estimates to prove \eqref{eq:onephi1} (see \cite{Hormander65}, \cite{Demailly12}, \cite{Demailly82}).

\begin{proof}[Proof of Corollary~\ref{co:onephi}] By the above two lemmas and our main theorem, if suffices to show that
\begin{equation}\label{eq:proofone}
    ||\mathcal L^{t,\C}_{V,\phi}K^{t,\bar\eta}||^2_t
    \leq ||S^{t,\bar\eta}||_t^2+
    \langle\langle |(\op^t\phi)_V|^2_{i\partial^t\op^t\phi^t}K^{t,\bar\eta},K^{t,\bar\eta}\rangle\rangle_t.
\end{equation}
By \eqref{eq:orthogonal} and \eqref{eq:d-barLV}, $-\mathcal L^{t,\C}_{V,\phi}K^{t,\bar\eta}$ is the $L^2$-minimal solution of
\begin{equation*}
    \op^t(\cdot)=u:=\partial^t_{\phi}\left(S^{t,\bar\eta}\right)+\left(\op^t \phi\right)_V\wedge K^{t,\bar\eta}.
\end{equation*}
Let $\widehat{\omega}$ be a complete K\"ahler metric on $X_t$. Let $f\in C_0^{\infty}(X_t, \wedge^{1,1}T^*X_t\otimes L_t)$ be a smooth form with compact support, we have
\begin{equation*}
    \langle\langle f,u\rangle\rangle_{\widehat{\omega}}=\big\langle\big\langle (\partial^t_{\phi})^*f,S^{t,\bar\eta}\big\rangle\big\rangle_{\widehat{\omega}}
    +\big\langle\big\langle f,\left(\op^t \phi\right)_V\wedge K^{t,\bar\eta}\big\rangle\big\rangle_{\widehat{\omega}},
\end{equation*}
If $f\in \ker \op^t$, then by the Bochner-Kodaira-Nakano equality,
\begin{equation*}
    ||(\op^t)^*f||_{\widehat{\omega}}^2-||(\partial_{\phi}^t)^*f||_{\widehat{\omega}}^2=\langle\langle [i\partial^t\op^t\phi^t,\widehat{L}]f, f\rangle\rangle_{\widehat{\omega}},
\end{equation*}
where $\widehat{L}$ is the adjoint of $\widehat{\omega}\wedge\cdot$. Thus
\begin{equation*}
    \big|\langle\langle f,u\rangle\rangle_{\widehat{\omega}}\big|^2\leq \left(||S^{t,\bar\eta}||^2_{\widehat{\omega}}+
    \big\langle\big\langle |(\op^t\phi)_V|^2_{i\partial^t\op^t\phi^t}K^{t,\bar\eta},K^{t,\bar\eta}\big\rangle\big\rangle_t\right) ||(\op^t)^*f||^2_{\widehat{\omega}}.
\end{equation*}
Since $||S^{t,\bar\eta}||_{\widehat{\omega}}=||S^{t,\bar\eta}||_t$, \eqref{eq:onephi1} follows from H\"ormander's theorem and the standard density technique on complete manifold.

If $\phi\equiv0$, we have
\begin{equation*}
    \mathcal L^{t,\C}_{V}K^{t,\bar\eta}=-(\op^t)^*(\square'')^{-1}\partial^t\left(\delta_{\op^t V}K^{t,\bar\eta}\right),
\end{equation*}
then
\begin{equation*}
    \langle\langle\mathcal L^{t,\C}_{V}K^{t,\bar\eta},\mathcal L^{t,\C}_{V}K^{t,\bar\zeta}\rangle\rangle_t
    =\langle\langle(\square'')^{-1}\partial^t\left(\delta_{\op^t V}K^{t,\bar\eta}\right), \partial^t\left(\delta_{\op^t V}K^{t,\bar\eta}\right)\rangle\rangle_{i\partial^t\op^t|\mu|^2}.
\end{equation*}
Write $S^{t,\bar\eta}=(\partial^t)^*\partial^tA^{t,\bar\eta}$. Since $\partial^tA^{t,\bar\eta}=0$ on the boundary, we have
\begin{equation*}
\square''\partial^tA^{t,\bar\eta}=\square'\partial^tA^{t,\bar\eta},
\end{equation*}
hence
\begin{equation*}
    \langle\langle\mathcal L^{t,\C}_{V}K^{t,\bar\eta},\mathcal L^{t,\C}_{V}K^{t,\bar\zeta}\rangle\rangle_t
    =\langle\langle\partial^tA^{t,\bar\eta}, \partial^t\left(\delta_{\op^t V}K^{t,\bar\eta}\right)\rangle\rangle_{i\partial^t\op^t|\mu|^2}
    =\langle\langle S^{t,\bar\eta},S^{t,\bar\zeta}\rangle\rangle_t.
\end{equation*}
By Lemma~\ref{le:boundary} and our main theorem, \eqref{eq:onephi2} follows.
\end{proof}

\section{Applications to holomorphic motion}

\begin{proof}[Proof of Theorem~\ref{th:Cflat}] Step 1. Proof of $(i)\Rightarrow(ii)$: Since $f$ is trivial, there exists a holomorphic motion $g$ with the same graph such that $g$ is holomorphic. Thus $V_g$ is a $p$-admissible holomorphic vector field on $\mathcal X$. By \eqref{eq:orthogonal}, $T^{t,\bar\eta}=0$, $(ii)$ follows.

Step 2. Proof of $(ii)\Rightarrow(iii)$: Since $k_2=0$ on the boundary, by \eqref{eq:onephi2} and $(ii)$, we have $T^{t,\bar\eta}=0$. Thus $(iii)$ follows.

Step 3. Proof of $(iii)\Rightarrow(i)$: Assume that $V_f=\partial/\partial t-\alpha\partial/\partial\zeta$. It suffices to find a holomorphic vector field $V$ on $\mathcal X$ such that $V=V_f$ on $\partial X_t, \ \forall \ t\in\mathbb D$. Since $d\bar\zeta\in \ker\partial^t$, by $(iii)$, we have
\begin{equation}\label{eq:step3}
    \int_{X_t}\alpha_{\bar\zeta}K^t(\zeta,\bar\eta)\ id\zeta\wedge d\bar\zeta=0,
\end{equation}
i.e. $\op^t\alpha\bot\ker\partial^t$. Hence there exits $\beta^t$ such that
\begin{equation}\label{eq:step31}
    \op^t\alpha=(\partial^t)^*(\beta^t id\zeta\wedge d\bar\zeta)=-i\op^t\beta^t.
\end{equation}
Let $\beta$ be a smooth function on a neighborhood of the closure of $\mathcal X$ such that
\begin{equation*}
    \beta|_{X_t}=\beta^t, \ \forall \ t\in\mathbb D.
\end{equation*}
Since $\beta^t id\zeta\wedge d\bar\zeta\in{\rm Dom}(\partial^t)^*$, we have
\begin{equation}\label{eq:step32}
    \beta|_{\partial X_t}=0, \ \forall \ t\in\mathbb D.
\end{equation}
Since $V_f$ is $p$-admissible, we have $\overline{V_f}\beta=0$ on the boundary of $\mathcal X$. Thus $\beta_{\bar t}-\bar \alpha\beta_{\bar \zeta}=0$ on the boundary. Since $\beta_{\bar \zeta}=i\alpha_{\bar\zeta}$ and $[V_f,\bar V_f]=0$, we have $\beta_{\bar t}-i\alpha_{\bar t}=0$ on the boundary. By \eqref{eq:step31}, $\beta_{\bar t}-i\alpha_{\bar t}$ is holomorphic on each fibre, thus
\begin{equation}\label{eq:step33}
    \beta_{\bar t}-i\alpha_{\bar t}\equiv0
\end{equation}
on $\mathcal X$. By \eqref{eq:step31}, \eqref{eq:step32} and \eqref{eq:step33},
\begin{equation*}
    V:=\partial/\partial t-(\alpha+i\beta)\partial/\partial\zeta
\end{equation*}
is a holomorphic vector field on $\mathcal X$ such that $V=V_f$ on $\partial X_t, \ \forall \ t\in\mathbb D$.
\end{proof}

Clearly, \eqref{eq:step3} is equivalent to $(iii)$. Since $V_f=F_*(\partial/\partial t)$, we have
$\alpha=-f_t$ on $\mathcal X$. Since
\begin{equation*}
    \overline{z_{\zeta}}=\frac{f_z}{|f_z|^2-|f_{\bar z}|^2}, \ z_{\bar\zeta}=\frac{-f_{\bar z}}{|f_z|^2-|f_{\bar z}|^2},
\end{equation*}
we have
\begin{equation}\label{eq:abz}
    \alpha_{\bar\zeta}=\frac{-(f_z)^2J_t}{|f_z|^2-|f_{\bar z}|^2}.
\end{equation}
Thus $\eqref{eq:trivialcondition}$ is equivalent to $(iii)$.

\begin{proof}[Proof of Corollary~\ref{co:last}] For affine holomorphic motion $f=z+a(t)\bar z$,
\begin{equation*}
    \alpha_{\bar\zeta}=\frac{-a'(t)}{1-|a(t)|^2}.
\end{equation*}
Thus $(iii)$ is equivalent to $a'(t)=0, \ \forall \ t\in\mathbb D$. Since $a(0)=0$, we get $(iii)$ is equivalent to $a\equiv0$. Thus Corollary~\ref{co:last} follows from Theorem~\ref{th:Cflat}.
\end{proof}

Let's prove Theorem~\ref{th:lastphi} finally.

\begin{proof}[Proof of Theorem~\ref{th:lastphi}] By \eqref{eq:d-barLV}, we have
\begin{equation*}
    \mathcal L^{t,\C}_{V_{\phi},\phi}K^{t,\bar\eta}=-(\op^t)^*(\square'')^{-1}\partial^t_{\phi}\left(\delta_{\op^t V_{\phi}}K^{t,\bar\eta}\right).
\end{equation*}
Thus
\begin{equation}\label{eq:last}
    \langle\langle\mathcal L^{t,\C}_{V_{\phi},\phi}K^{t,\bar\eta}, \mathcal L^{t,\C}_{V_{\phi},\phi}K^{t,\bar\zeta}\rangle\rangle_t=\langle\langle
    (\square'')^{-1}\partial^t_{\phi}\left(\delta_{\op^t V_{\phi}}K^{t,\bar\eta}\right),
    \partial^t_{\phi}\left(\delta_{\op^t V_{\phi}}K^{t,\bar\zeta}\right)\rangle\rangle_{i\partial^t\op^t\phi^t}.
\end{equation}
Write $S^{t,\bar\eta}=(\partial^t_{\phi})^*\partial^t_{\phi}A^{t,\bar\eta}$, we have $\partial^t_{\phi}\left(\delta_{\op^t V_{\phi}}K^{t,\bar\eta}\right)=\square'\partial^t_{\phi}A^{t,\bar\eta}$. Since $\partial^t_{\phi}A^{t,\bar\eta}\in{\rm Dom}(\partial^t_{\phi})^*$, we have $\partial^t_{\phi}A^{t,\bar\eta}=0$ on the boundary of $X_t$. Thus $\partial^t_{\phi}A^{t,\bar\eta}\in{\rm Dom}(\square'')$ and
\begin{equation}\label{eq:last1}
    \square'\partial^t_{\phi}A^{t,\bar\eta}=\square''\partial^t_{\phi}A^{t,\bar\eta}-\partial^t_{\phi}A^{t,\bar\eta}.
\end{equation}
By \eqref{eq:last}, we have
\begin{equation}\label{eq:last2}
    \langle\langle\mathcal L^{t,\C}_{V_{\phi},\phi}K^{t,\bar\eta}, \mathcal L^{t,\C}_{V_{\phi},\phi}K^{t,\bar\zeta}\rangle\rangle_t=\langle\langle
    \partial^t_{\phi}A^{t,\bar\eta}-(\square'+1)^{-1}\partial^t_{\phi}A^{t,\bar\eta},
    \partial^t_{\phi}\left(\delta_{\op^t V_{\phi}}K^{t,\bar\zeta}\right)\rangle\rangle_{i\partial^t\op^t\phi^t}.
\end{equation}
Since $\partial^t_{\phi}A^{t,\bar\eta}\in{\rm Dom}(\partial^t_{\phi})^*$, we have $(\square'+1)^{-1}\partial^t_{\phi}A^{t,\bar\eta}\in{\rm Dom}(\partial^t_{\phi})^*$ and
\begin{equation}\label{eq:last3}
    (\partial^t_{\phi})^*(\square'+1)^{-1}\partial^t_{\phi}A^{t,\bar\eta}=
    (\square'+1)^{-1}(\partial^t_{\phi})^*\partial^t_{\phi}A^{t,\bar\eta}=
    (\square'+1)^{-1}S^{t,\bar\eta}.
\end{equation}
By Lemma~\ref{le:boundary} and our main theorem, we have
\begin{equation}\label{eq:last4}
 K_{t\bar t}^t(\zeta,\bar\eta) =\langle\langle K_{\bar t}^{t,\bar\eta}, K_{\bar t}^{t,\bar\zeta}\rangle\rangle_t + \langle\langle T^{t,\bar \eta}, T^{t,\bar \zeta}\rangle\rangle_t +\langle\langle(\square'+1)^{-1}S^{t,\bar\eta},S^{t,\bar\zeta}\rangle\rangle_t.
\end{equation}
Since $\square'T^{t,\bar \eta}=0$, we have
\begin{equation}\label{eq:last5}
\langle\langle T^{t,\bar \eta}, T^{t,\bar \zeta}\rangle\rangle_t +\langle\langle(\square'+1)^{-1}S^{t,\bar\eta},S^{t,\bar\zeta}\rangle\rangle_t
=\langle\langle \left(\square'+1\right)^{-1}\delta_{\op^t V_{\phi}}K^{t,\bar\eta}, \delta_{\op^t V_{\phi}}K^{t,\bar\zeta}\rangle\rangle_t.
\end{equation}
The proof of Theorem~\ref{th:lastphi} is complete.
\end{proof}

\end{document}